\newtheorem{theorem}{Theorem}[section]
\newtheorem{lemma}[theorem]{Lemma}
\newtheorem{definition}[theorem]{Definition}
\newtheorem{proposition}[theorem]{Proposition}
\newtheorem{corollary}[theorem]{Corollary}
\theoremstyle{definition}
\newcommand\pf{\begin{proof}}
\newcommand\epf{\end{proof}}
\newcommand\C{\mathbb{C}}
\newcommand\B{\mathcal{B}}
\newcommand\coc{\mathcal C}
\newcommand\yd{\mathcal{YD}}
\newcommand\tor{\mathrm{Tor}}
\newcommand\ext{\mathrm{Ext}}
\DeclareMathOperator{\Hom}{Hom}
\DeclareMathOperator{\Ker}{Ker}
\DeclareMathOperator{\GL}{GL}
\DeclareMathOperator{\SL}{SL}
\numberwithin{equation}{section}
\title{Hochschild homology of Hopf algebras and  free Yetter-Drinfeld resolutions of the counit}
\author{Julien Bichon}
\address{
Laboratoire de Math\'ematiques,
Universit\'e Blaise Pascal,
Complexe universitaire des C\'ezeaux,
63177~Aubi\`ere Cedex, France}
\email{Julien.Bichon@math.univ-bpclermont.fr}
\subjclass[2010]{16T05, 16E40, 46L89}
\begin{document}

\begin{abstract}
We show that if $A$ and $H$ are Hopf algebras that have equivalent tensor categories of comodules,
then one can transport what we call a  free Yetter-Drinfeld resolution of the counit of $A$ to 
the same kind of resolution for the counit of $H$, exhibiting in this way strong links between 
the Hochschild homologies of $A$ and $H$.
This enables us to get a finite free resolution of the counit of $\mathcal B(E)$, the Hopf algebra
of the bilinear form associated to an invertible matrix $E$, generalizing an ealier construction 
of Collins, H\"{a}rtel and Thom in the orthogonal case $E=I_n$. It follows that $\B(E)$ is smooth of dimension 3 and satisfies Poincar\'e duality. Combining this with results of Vergnioux,
it  also follows that when $E$ is an antisymetric matrix, the $L^2$-Betti numbers of the associated
discrete quantum group all vanish. We also use our resolution to compute the bialgebra cohomology
of $\B(E)$ in the cosemisimple case.
\end{abstract}

\maketitle

\section{introduction}

Let $n \in \mathbb N^*$ and let $A_o(n)$ be the algebra (over the field of complex numbers) presented by generators $(u_{ij})_{1 \leq i,j\leq n}$ and relations making the matrix $u=(u_{ij})$ orthogonal. This is a Hopf algebra, introduced by Dubois-Violette and Launer \cite{dvl} and independently by Wang \cite{wa95} in the compact quantum group setting. The Hopf algebras $A_o(n)$ play an important role in quantum group theory, since any finitely generated Hopf algebra of Kac type (the square of the antipode is the identity), and in particular any semisimple Hopf algebra, is a quotient of one of these. They have been studied from several perspectives, in particular from  the (co)representation theory viewpoint \cite{ba96,bi1,bideva} and the probabilistic and operator algebraic viewpoint
\cite{baco,bacozi,vaever,vava,voi11}.

The homological study of $A_o(n)$ begins in \cite{cht}, where Collins, H\"artel and Thom define an exact sequence of $A_o(n)$-modules
$$0 \rightarrow A_o(n) \longrightarrow
  M_n(A_o(n)) \longrightarrow
 M_n(A_o(n)) \longrightarrow A_o(n) \overset{\varepsilon} \longrightarrow \mathbb C \to 0 \quad (\star)$$
thus yielding a resolution of the conit of $A_o(n)$ by free $A_o(n)$-modules. From this exact sequence,
they deduce some important homological information on $A_o(n)$:
\begin{enumerate}
 \item $A_o(n)$ is smooth of dimension 3,
\item $A_o(n)$ satisfies Poincar\'e duality,
\item The $L^2$-Betti numbers of $A_o(n)$ all vanish.
\end{enumerate}
An inconvenient in \cite{cht} is that the verification of the exactness of $\star$ is a very long computation involving tedious Gr\"obner basis computations. It is the aim of the present paper to propose a simpler and more conceptual proof of the exactness of the sequence $\star$, together with a generalization to a larger class of Hopf algebras.  

Our starting point is the combination of the following two known facts.
\begin{enumerate}
 \item For $q \in \mathbb C^*$, there exists a resolution of the counit of $\mathcal O(SL_q(2))$
having the same length as the one of the sequence $\star$ (see e.g. \cite{hk}).
\item For $q$ satisfying $q +q^{-1}= -n$, there exists an equivalence of tensor categories of comodules $\mathcal M^{\mathcal O(SL_q(2))} \simeq^\otimes \mathcal M^{A_o(n)}$ \cite{bi1}. 
\end{enumerate}
Therefore, although one cannot expect that a tensor equivalence between categories of comodules induces isomorphisms between Hochschild homologies, it is tempting to believe that it is possible to use the above monoidal equivalence to transport a resolution of the counit of 
 $\mathcal O(\SL_q(2))$ by free modules having appropriate additional structures (in particular a comodule structure)
to get a resolution of the counit of $A_o(n)$ having the same length.

The appropriate structure we find is that of free Yetter-Drinfeld module, see Section 3 for the definition, these are  Yetter-Drinfeld modules that are in particular free as modules.
We show that if $A$ and $H$ are Hopf algebras that have equivalent tensor categories of comodules,
then one can transport a free  Yetter-Drinfeld resolution of the counit of $A$ to 
the same kind of resolution for the counit of $H$ (with preservation of the length of the resolution).

Now let $E \in \GL_n(\C)$ with $n \geq 2$ and consider the algebra $\mathcal B(E)$ presented by generators 
$(u_{ij})_{1 \leq i,j\leq n}$ and relations
$$E^{-\!1} u^t E u = I_n = u E^{-\!1} u^t E,$$
where $u$ is the matrix $(u_{ij})_{1 \leq i,j \leq n}$. The Hopf algebra $\B(E)$ was defined in \cite{dvl}, and corresponds to the quantum symmetry group of the bilinear form associated to $E$. We have $\B(I_n) = A_o(n)$ and  $\mathcal O(\SL_q(2)) = \B(E_q)$, where 
$$E_q =  \left(\begin{array}{cc} 0 & 1 \\
                          -q^{-1} & 0\\
       \end{array} \right)$$
We construct, for any $E \in \GL_n(\C)$, an exact sequence of $\B(E)$-modules
$$0 \rightarrow \B(E) \longrightarrow
  M_n(\B(E)) \longrightarrow
 M_n(\B(E)) \longrightarrow \B(E) \overset{\varepsilon} \longrightarrow \mathbb C \to 0 \quad (\star_E)$$
See Section 5. For $E=I_n$, the sequence is the one of Collins-H\"artel-Thom in \cite{cht}. The verification of exactness goes as follows.
\begin{enumerate}
\item  We endow each constituent of the sequence of a free Yetter-Drinfeld module structure.
 \item We use the previous construction to transport sequences of free Yetter-Drinfeld modules to show that for $E\in \GL_n(\C)$, $F \in \GL_m(\C)$ with ${\rm tr}(E^{-1}E^t)={\rm tr}(F^{-1}F^t)$ and $m,n\geq 2$ (so that $\mathcal M^{\B(E)} \simeq^\otimes \mathcal M^{\B(F)}$, \cite{bi1}), the sequence $\star_E$ is exact if and only if the sequence $\star_F$ is exact. 
\item We check that for any $q \in \C^*$, the sequence $\star_{E_q}$ is exact (this is less than a one page computation). Now for any $E \in \GL_n(\C)$ with $n \geq 2$, we pick $q \in \C^*$ such that ${\rm tr}(E^{-1}E^t)=-q-q^{-1}={\rm tr}(E_q^{-1}E_q^t)$, and we conclude from the previous item that $\star_E$ is exact.
\end{enumerate}
Similarly as in \cite{cht}, the exactness of the sequence $\star_E$ has several interesting consequences.
The first one is that $\B(E)$ is smooth of dimension $3$ for any $E \in \GL_n(\C)$, $n \geq 2$ (recall \cite{vdb} that an algebra $A$ is said to be smooth of dimension $d$ is the $A$-bimodule $A$ has a finite resolution of length $d$ by finitely generated projective $A$-bimodules, with $d$ being the smallest possible length for such a resolution).

The second consequence is that $\B(E)$ satisfies a Poincar\'e duality between its Hochschild homology and cohomology.
Since Van den Bergh's seminal paper \cite{vdb}, Poincar\'e duality for algebras has been the subject of many papers, in which the authors propose axioms that will have Poincar\'e duality as a corollary, see e.g. \cite{kokr} for a recent general and powerful framework. Let us emphasize that the exact sequence $\star_E$ enables us to establish Poincar\'e duality in a straightforward manner, without having to check any   condition such as the ones proposed in \cite{bz} (where moreover noetherianity assumptions were done, while $\B(E)$ is not noetherian if $n\geq 3$).   

A third consequence concerns bialgebra cohomology (Gerstenhaber-Schack cohomology \cite{gs1,gs2}), for which only very few full computations are known for non-commutative and non-cocommutative Hopf algebras  (see \cite{tai07}).
The fact that the exact sequence $\star_E$ consists of free Yetter-Drinfeld modules enables us to compute the bialgebra cohomology 
of $\B(E)$ (and hence in particular of $\mathcal O(\SL_q(2))$) in the cosemisimple case.

The last consequence concerns $L^2$-Betti numbers. Recall that the definition of $L^2$-Betti numbers for groups \cite{luckbook} can be generalized to discrete quantum groups (compact Hopf algebras of Kac type) \cite{ky08}. Combining the results in \cite{cht} with a result of Vergnioux \cite{ver12} (vanishing of the first Betti number of $A_o(n)$), Collins, H\"artel and Thom have shown that the $L^2$-Betti numbers of $A_o(n)$ all vanish. Using similar arguments we show that the $L^2$-Betti numbers of $A_o(J_m)$ all vanish, where $J_m$ is any anti-symmetric matrix. This completes the computation of the $L^2$-Betti numbers of all universal orthogonal discrete quantum groups of Kac type.  

The paper is organized as follows.
Section 2 is devoted to preliminaries. 
In Section 3 we introduce free Yetter-Drinfeld modules and remark that the standard resolution of the counit of a Hopf algebra is a free Yetter-Drinfeld resolution. In Section 4 we show how to transport free Yetter Drinfeld resolutions for Hopf algebras having equivalent tensor categories of comodules. In Section 5 we define and prove the exactness of the announced resolution of the counit of $\B(E)$. Section 6 is devoted to the several applications
we have already announced, and Section 7 consists of concluding remarks.

\section{Preliminaries}

\subsection{Notations and conventions} We assume that the base field is $\C$, the field of complex numbers (although our results, except in Subsection 6.4, do not depend on the base field). We assume that the reader is familiar with the theory of Hopf algebras and their tensor categories of comodules, as e.g. in \cite{kas,ks,mon}.
If $A$ is a Hopf algebra, as usual, $\Delta$, $\varepsilon$ and $S$ stand respectively for the comultiplication, counit and antipode of $A$. We use Sweedler's notations in the standard way. The category of right $A$-comodules is denoted $\mathcal M^A$.  If $M$ is an $A$-bimodule, then $H_*(A,M)$ and $H^*(A,M)$ denote  the respective Hochschild homology and cohomology groups of $A$ (with coefficients in $M$). 

\subsection{Hochschild homology of Hopf algebras and resolutions of the counit}

In this section we recall how the Hochschild homology and cohomology 
of a Hopf algebra $A$ can be described by using suitable $\tor$ and $\ext$ groups on the 
category of left or right $A$-modules and resolutions of the counit. 
This has been discussed under various forms in several papers (see \cite{ft}, \cite{gk}, \cite{hk}, \cite{bz}, \cite{cht})
and probably has its origins in \cite{ce}, Section 6 of chapter X.

\begin{proposition}\label{ident}
 Let $A$ be a Hopf algebra and let $M$ be an $A$-bimodule.
 Define a left $A$-module structure on $M$ and a right $A$-module structure on $M$ by
 $$ a \rightarrow x = a_{(2)}\cdot x \cdot S(a_{(1)}), \quad
 x \leftarrow a = S(a_{(1)})\cdot x \cdot a_{(2)}$$
 and denote by $M'$ and $M''$ the respective corresponding left $A$-module and 
 right $A$-module. 
 Then for all $n \in \mathbb N$ there exist isomorphisms of
 vector spaces
 $$H_n(A,M) \simeq \tor^A_n(\C_\varepsilon, M'), \quad H^n(A,M) \simeq \ext^n_A(\C_\varepsilon, M'')$$
\end{proposition}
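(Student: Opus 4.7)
The plan is to invoke the standard identification
\[
H_n(A,M) \simeq \tor^{A^e}_n(A,M), \qquad H^n(A,M) \simeq \ext^n_{A^e}(A,M),
\]
where $A^e = A \otimes A^{\mathrm{op}}$ and $A$ is viewed as an $A^e$-module via multiplication, and then to use the Hopf algebra structure of $A$ in order to convert resolutions of $A$ over $A^e$ into one-sided $A$-resolutions of the counit $\C_\varepsilon$.

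Starting from a projective resolution $P_* \to \C_\varepsilon$ of the counit by left $A$-modules, I would form the complex $Q_* \to A$ with $Q_n := P_n \otimes A$ endowed with the $A$-bimodule structure
\[
a \cdot (p \otimes b) \cdot c := a_{(1)} \cdot p \otimes a_{(2)} b c,
\]
augmented by $P_0 \otimes A \twoheadrightarrow \C \otimes A \simeq A$. Exactness of $Q_* \to A$ is immediate from that of $P_*$, since tensoring with $A$ over the base field is exact. Each $Q_n$ is $A^e$-projective: when $P_n = A \otimes V$ is free, the map
\[
A \otimes V \otimes A \longrightarrow Q_n, \qquad x \otimes v \otimes y \longmapsto (x_{(1)} \otimes v) \otimes x_{(2)} y,
\]
is an $A$-bimodule isomorphism from the free $A$-bimodule on $V$, with inverse $(x \otimes v) \otimes y \mapsto x_{(1)} \otimes v \otimes S(x_{(2)}) y$ furnished by the antipode; the general case follows by passing to direct summands.

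It then remains to identify $Q_* \otimes_{A^e} M$ and $\Hom_{A^e}(Q_*, M)$ with standard complexes computing $\tor^A_*(\C_\varepsilon, M')$ and $\ext^*_A(\C_\varepsilon, M'')$, respectively. A direct calculation using the defining relations of the tensor and Hom functors together with the antipode axioms yields such identifications, in which the $A$-actions acquired by $M$ are precisely the twisted actions $a \rightarrow x = a_{(2)} \cdot x \cdot S(a_{(1)})$ and $x \leftarrow a = S(a_{(1)}) \cdot x \cdot a_{(2)}$ of the statement. Passing to (co)homology then gives the proposition. Conceptually nothing here is difficult; the entire argument reduces to Sweedler-notation bookkeeping, the only real point of care being that the antipode must end up in precisely the prescribed Sweedler positions rather than in some equivalent but superficially different formula.
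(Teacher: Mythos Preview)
Your strategy is sound but follows a genuinely different route from the paper. The paper never passes through $A^e$ at all: it simply writes the Hochschild bar complex for $H_*(A,M)$ side by side with the complex obtained by applying $-\otimes_A M'$ to the standard \emph{right}-module resolution of $\C_\varepsilon$, and exhibits an explicit chain isomorphism
\[
\theta : M\otimes A^{\otimes n} \longrightarrow A^{\otimes n}\otimes M',\qquad
x\otimes a_1\otimes\cdots\otimes a_n \longmapsto a_{1(2)}\otimes\cdots\otimes a_{n(2)}\otimes x\cdot(a_{1(1)}\cdots a_{n(1)}),
\]
together with its inverse, and verifies $d\theta=\theta b$ by hand; an analogous map $\vartheta$ handles cohomology. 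Your argument instead uses $H_*(A,M)\cong\tor^{A^e}_*(A,M)$ and the induction $P\mapsto P\otimes A$ from one-sided projectives to $A^e$-projectives. The paper's version is entirely explicit and self-contained, which matters later when concrete resolutions are manipulated; yours is more conceptual and has the pleasant feature of applying to an arbitrary projective resolution of $\C_\varepsilon$, not just the bar resolution.

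One point does need tightening, and it is exactly the pitfall you flag in your last sentence. You resolve $\C_\varepsilon$ by \emph{left} $A$-modules, whereas in the statement $\C_\varepsilon$ is a right module and the $\ext$ is taken in right $A$-modules. With your bimodule structure on $Q_n$, unwinding $\Hom_{A^e}(Q_n,M)$ gives maps $g:P_n\to M$ satisfying $g(a_{(1)}p)\,a_{(2)}=a\,g(p)$, hence $g(ap)=a_{(1)}\,g(p)\,S(a_{(2)})$, which is a \emph{left} action on $M$ rather than the right action $x\leftarrow a=S(a_{(1)})\cdot x\cdot a_{(2)}$ of the statement; the homology side comes out similarly shifted. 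Starting from a right-module resolution of $\C_\varepsilon$ (as the paper does) makes the Sweedler bookkeeping land exactly on $M'$ and $M''$; with your conventions you obtain isomorphic but cosmetically different twisted modules.
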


The previous $\ext$-groups are those in the category of right $A$-modules.
In lack of a reference that would give exactly the isomorphisms of Proposition \ref{ident},
we will provide an explicit proof. Writing down the proof also gives us the opportunity
to review
the  material involved in the statement of the proposition \cite{wei}.

Let $(A, \varepsilon)$ be an augmented algebra, i.e. $A$ is an algebra and $\varepsilon : A \longrightarrow \C$
is an algebra map that we call the counit of $A$.
We view $\C$ as a right $A$-module via $\varepsilon$ and we denote by $\C_\varepsilon$ this 
right $A$-module. 
Recall that the standard resolution of $\C_\varepsilon$ (the standard resolution of the counit)
is given by the complex of free right $A$-modules
$$\cdots \longrightarrow  A^{\otimes n+1} \longrightarrow A^{\otimes n} \longrightarrow \cdots 
\longrightarrow A \otimes A \longrightarrow A \longrightarrow 0 $$
where each differential is given by
\begin{align*}
 A^{\otimes n+1} &\longrightarrow A^{\otimes n} \\
 a_1 \otimes \cdots \otimes a_{n+1} &\longmapsto \varepsilon(a_1) a_2 \otimes \cdots \otimes a_{n+1} + 
 \sum_{i=1}^n(-1)^i a_1 \otimes \cdots \otimes a_i a_{i+1} \otimes \cdots \otimes a_{n+1}
\end{align*}
Given a left $A$-module $M$, the vector spaces $\tor^A_*(\C_\varepsilon, M)$
are given by the homology of the complex obtained by tensoring any projective resolution of $\C_\varepsilon$
by $- \otimes_A M$. Thus using
the standard resolution of the counit, after suitable identifications,
we see that the vector spaces $\tor^A_*(\C_\varepsilon, M)$
are given by the homology of the following complex
$$\cdots \longrightarrow  A^{\otimes n} \otimes M \overset{d}\longrightarrow A^{\otimes n-1}\otimes M \overset{d} \longrightarrow \cdots \overset{d}
\longrightarrow A \otimes M \overset{d} \longrightarrow M \longrightarrow 0 \quad $$
where the differential $d :  A^{\otimes n} \otimes M \longrightarrow A^{\otimes n-1}\otimes M$ is given by
\begin{align*}
d(a_1 \otimes \cdots \otimes a_n  \otimes x)
=&\varepsilon(a_1) a_2 \otimes \cdots \otimes a_{n} \otimes x + 
 \sum_{i=1}^{n-1}(-1)^i a_1 \otimes \cdots \otimes a_i a_{i+1} \otimes \cdots \otimes a_{n} \otimes x \\
&+ (-1)^na_1 \otimes \cdots \otimes a_{n-1} \otimes a_n\cdot x
\end{align*}
Recall now that if $A$ is an algebra and $M$ is an $A$-bimodule, the Hochschild homology
groups $H_*(A,M)$ are the homology groups of the complex
$$\cdots \longrightarrow  M \otimes A^{\otimes n}  \overset{b}\longrightarrow M \otimes A^{\otimes n-1} \overset{b}\longrightarrow \cdots 
\overset{b}\longrightarrow M \otimes A \overset{b}\longrightarrow M \longrightarrow 0 \quad $$
where the differential $b :  M \otimes A^{\otimes n}  \longrightarrow M \otimes A^{\otimes n-1}$ is given by
\begin{align*}
b(x\otimes a_1 \otimes \cdots \otimes a_n  )
=&x \cdot a_1 \otimes \cdots \otimes a_{n} + 
 \sum_{i=1}^{n-1}(-1)^i x \otimes a_1 \otimes \cdots \otimes a_i a_{i+1} \otimes \cdots \otimes a_{n} \\
&+ (-1)^n a_n\cdot x \otimes a_1 \otimes \cdots \otimes a_{n-1}
\end{align*}
Assume now that $A$ is a Hopf algebra and let $M$ be an $A$-bimodule.
Consider the linear map
\begin{align*}
 \theta : M \otimes A^{\otimes n} &\longrightarrow A^{\otimes n} \otimes M' \\
  x \otimes a_1 \otimes \cdots \otimes a_n &\longmapsto
  a_{1(2)} \otimes \cdots \otimes a_{n(2)} \otimes x\cdot(a_{1(1)}  \cdots  \cdots a_{n(1)})
 \end{align*}
 It is straightforward to see that $\theta$ is an isomorphism
 with inverse given by
 \begin{align*}
  \theta^{-1} : A^{\otimes n} \otimes M'&\longrightarrow M \otimes A^{\otimes n} \\
  a_1 \otimes \cdots \otimes a_n \otimes x &\longmapsto
  x \cdot S(a_{1(1)}  \cdots  a_{n(1)}) \otimes (a_{1(2)} \otimes  \cdots \otimes \cdots a_{n(2)})
 \end{align*}
 and that $d \circ \theta = \theta \circ b$. Hence $\theta$ induces an isomorphism between the complexes defining $H_*(A,M)$ and $\tor^A_*(\C_\varepsilon, M')$
 and we get the first isomorphism $H_*(A,M) \simeq \tor^A_*(\C_\varepsilon, M')$.

 \medskip
 
For the second isomorphism in Proposition \ref{ident}, let us come back to the situation
of an augmented algebra $(A, \varepsilon)$.
Given a right $A$-module $M$, the vector spaces $\ext^*_A(\C_\varepsilon, M)$
are given by the cohomology of the complex obtained by applying the functor 
${\rm Hom}_A(-,M)$ to any projective resolution of $\C_\varepsilon$. Thus, using
the standard resolution of the counit, we see that after suitable identifications,
the vector spaces $\ext_A^*(\C_\varepsilon, M)$
are given by the cohomology of the following complex
$$ 0 \longrightarrow {\rm Hom}(\C,M) \overset{\partial}\longrightarrow \Hom(A,M) \overset{\partial}\longrightarrow
\cdots \overset{\partial}\longrightarrow\Hom(A^{\otimes n}, M) \overset{\partial}\longrightarrow \Hom(A^{\otimes n+1}, M) \overset{\partial}\longrightarrow \cdots$$
where the differential $\partial : \Hom(A^{\otimes n}, M) \longrightarrow \Hom(A^{\otimes n+1}, M)$
is given by 
\begin{align*}
\partial(f)(a_1 \otimes \cdots \otimes a_{n+1}) = &\varepsilon(a_1)
f(a_2 \otimes \cdots \otimes a_{n+1}) + \sum_{i=1}^{n}(-1)^i f(a_1 \otimes \cdots \otimes a_i a_{i+1} \otimes \cdots \otimes a_{n+1}) \\
&+ (-1)^{n+1} f(a_1 \otimes \cdots \otimes a_{n}) \cdot a_{n+1}
\end{align*}
If $A$ is an algebra and $M$ is an $A$-bimodule, the Hochschild cohomology
groups $H^*(A,M)$ are the cohomology groups of the complex
$$ 0 \longrightarrow {\rm Hom}(\C,M) \overset{\delta}\longrightarrow \Hom(A,M) \overset{\delta}\longrightarrow
\cdots \overset{\delta}\longrightarrow\Hom(A^{\otimes n}, M) \overset{\delta}\longrightarrow \Hom(A^{\otimes n+1}, M) \overset{\delta}\longrightarrow \cdots$$
where the differential $\delta : \Hom(A^{\otimes n}, M) \longrightarrow \Hom(A^{\otimes n+1}, M)$
is given by 
\begin{align*}
\delta(f)(a_1 \otimes \cdots \otimes a_{n+1}) = &a_1\cdot
f(a_2 \otimes \cdots \otimes a_{n+1}) + \sum_{i=1}^{n}(-1)^i f(a_1 \otimes \cdots \otimes a_i a_{i+1} \otimes \cdots \otimes a_{n+1}) \\
&+ (-1)^{n+1} f(a_1 \otimes \cdots \otimes a_{n}) \cdot a_{n+1}
\end{align*}
Assume now that $A$ is a Hopf algebra and let $M$ be an $A$-bimodule.
Consider the linear map
\begin{align*}
 \vartheta : \Hom(A^{\otimes n}, M) &\longrightarrow \Hom(A^{ \otimes n}, M'') \\
 f &\longmapsto \hat{f}, \ \hat{f}(a_1 \otimes \cdots \otimes a_n) = S(a_{1(1)} \cdots a_{n(1)})
 f(a_{1(2)} \otimes \cdots \otimes a_{n(2)})
\end{align*}
It is easy to see that $\vartheta$ is an isomorphism and that $\partial \circ \vartheta
= \vartheta \circ \delta$. Hence $\vartheta$ induces an isomorphism between the complexes defining $H^*(A,M)$ and $\ext_A^*(\C_\varepsilon, M'')$
 and we get the second isomorphism $H^*(A,M) \simeq \ext_A^*(\C_\varepsilon, M'')$. 


\subsection{The Hopf algebra $\B(E)$}
 Let $E \in \GL_n(\C)$. Recall that the algebra $\mathcal B(E)$ \cite{dvl} is presented by generators 
$(u_{ij})_{1 \leq i,j\leq n}$ and relations
$$E^{-\!1} u^t E u = I_n = u E^{-\!1} u^t E,$$
where $u$ is the matrix $(u_{ij})_{1 \leq i,j \leq n}$. It has a Hopf algebra structure defined by
$$\Delta(u_{ij})
= \sum_{k=1}^n u_{ik} \otimes u_{kj}, \ 
\varepsilon(u_{ij}) = \delta_{ij}, \ 
S(u) = E^{-1}u^t E$$
For the matrix $E_q \in \GL_2(\C)$ in the introduction, we have $\B(E_q) = \mathcal O(\SL_q(2))$, and thus the Hopf algebras $\B(E)$ are generalizations of $\mathcal O(\SL_q(2))$. It is shown in \cite{bi1} that 
the isomorphism class of the Hopf algebra $\B(E)$ only depends on the bilinear form associated to the matrix $E$,
and that 
for $q \in \C^*$ satisfying  ${\rm tr}(E^{-1}E^t)= -q-q^{-1}$, the tensor categories of comodules over $\B(E)$ and $\mathcal O(\SL_q(2))$ are equivalent. 

The fundamental $n$-dimensional $\mathcal B(E)$-comodule is denoted by $V_E$: it has a basis
$e_1^E, \ldots , e_n^E$ and right coaction $\alpha : V_E \rightarrow V_E \otimes \B(E)$ defined by
$\alpha(e_i^E) =\sum_{k=1}^n e_k^E \otimes u_{ki}$. For future use, we record that the following linear maps
\begin{align*}
\delta : \C &\longrightarrow V_E \otimes V_E, \  &\varphi :  V_E &\longrightarrow V_E^*\\
1 & \longmapsto \sum_{i,j=1}^n E_{ij}^{-1} e_i^E \otimes e_j^E, \
& e_i^E &\longmapsto \sum_{k=1}^n E_{ik}e_k^{E*}
\end{align*}
are morphisms of $\B(E)$-comodules (where $E^{-1}=(E_{ij}^{-1})$).

We now define some maps that will be used in Section 6.
\begin{enumerate}
\item The sovereign character of $\B(E)$ is the algebra map $\Phi : \B(E) \rightarrow \C$ defined by
$\Phi(u)= E^{-1}E^t$. It satisfies $S^2 = \Phi * {\rm id} * \Phi^{-1}$. 
\item The modular automorphism of 
$\B(E)$ is the algebra automorphism $\sigma$ of $B(E)$ defined by $\sigma(u) = E^{-1}E^t u  E^{-1}E^t$, i.e.
$\sigma = \Phi* {\rm id} *\Phi$. 
\item We denote by $\theta$ is the algebra anti-automorphism of $\mathcal B(E)$ defined by
$\theta = S*\Phi * \Phi$, i.e. $\theta(u) = S(u)E^{-1}E^t
 E^{-1}E^t$. We have $S \circ \theta=\sigma$.
\end{enumerate}

\section{Free Yetter-Drinfeld modules}

In this section we introduce the concept of free Yetter-Drinfeld module, which will be essential for our purpose.
We begin by recalling the basics on Yetter-Drinfeld modules.

Let $A$ be a Hopf algebra. Recall that a (right-right) Yetter-Drinfeld
module over $A$ is a right $A$-comodule and right $A$-module $V$
satisfying the condition, $\forall v \in V$, $\forall a \in A$, 
$$(v \leftarrow a)_{(0)} \otimes  (v \leftarrow a)_{(1)} =
v_{(0)} \leftarrow a_{(2)} \otimes S(a_{(1)}) v_{(1)} a_{(3)}$$
The category of Yetter-Drinfeld modules over $A$ is denoted $\yd_A^A$:
the morphisms are the $A$-linear $A$-colinear maps.
Endowed with the usual tensor product of 
modules and comodules, it is a tensor category.

An important example of Yetter-Drinfeld module is the right coadjoint Yetter-Drinfeld
module $A_{\rm coad}$: as a right $A$-module $A_{\rm coad}=A$ and the right $A$-comodule structure
is defined by 
$${\rm ad}_r(a) = a_{(2)} \otimes S(a_{(1)})a_{(3)}, \forall a \in A$$
The following result, which will be of vital importance for us, generalizes
the construction of the right coadjoint comodule.

\begin{proposition}
 Let $A$ be  a Hopf algebra and let $V$ be a right $A$-comodule. 
 Endow $V \otimes A$ with the right $A$-module structure defined by multiplication on the right.
 Then the linear map
 \begin{align*}
  V \otimes A &\longrightarrow V \otimes A \otimes  A \\
  v \otimes a &\longmapsto v_{(0)}  \otimes a_{(2)} \otimes S(a_{(1)})v_{(1)}a_{(3)}
 \end{align*}
endows $V \otimes A$ with a right $A$-comodule structure, and with a Yetter-Drinfeld module structure.
We denote by $V \boxtimes A$ the resulting Yetter-Drinfeld module, and this constructions produces
a functor
 \begin{align*}
  L : \mathcal M^A \longrightarrow \yd_A^A \\
  V \longmapsto V \boxtimes A
 \end{align*}
\end{proposition}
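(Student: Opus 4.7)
The plan is to verify, in order, the coaction axioms for the proposed map $\rho$, the Yetter-Drinfeld compatibility, and functoriality. A useful sanity check is that specializing to $V = \C$ with trivial coaction recovers $\rho(1 \otimes a) = 1 \otimes a_{(2)} \otimes S(a_{(1)}) a_{(3)}$, which is the coadjoint coaction on $A_{\rm coad}$; the general construction is thus a $V$-twisted version of the coadjoint one.

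For the coaction axioms, the counit axiom is immediate: applying $\id \otimes \varepsilon$ to the last factor and using $\varepsilon \circ S = \varepsilon$ together with the counit axioms for $\Delta$ and for the coaction on $V$ returns $v \otimes a$. Coassociativity is a Sweedler bookkeeping exercise. Computing $(\rho \otimes \id)\rho(v \otimes a)$, one applies $\rho$ to the first two tensorands $v_{(0)} \otimes a_{(2)}$; unfolding $\Delta^{2}(a_{(2)})$ by coassociativity of $\Delta$ and using coassociativity of the $V$-coaction produces the expression
$$v_{(0)} \otimes a_{(3)} \otimes S(a_{(2)}) v_{(1)} a_{(4)} \otimes S(a_{(1)}) v_{(2)} a_{(5)}.$$
Computing $(\id \otimes \Delta)\rho(v \otimes a)$, one applies $\Delta$ to $S(a_{(1)}) v_{(1)} a_{(3)}$ using multiplicativity of $\Delta$, anti-multiplicativity of $S$, and coassociativity, and arrives at the same expression. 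The two agree, so $\rho$ is a coaction.

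The Yetter-Drinfeld compatibility then falls out essentially for free from the very shape of $\rho$. Reading off $w_{(0)} = v_{(0)} \otimes a_{(2)}$ and $w_{(1)} = S(a_{(1)}) v_{(1)} a_{(3)}$ for $w = v \otimes a$, I expand $\rho(w \leftarrow b) = \rho(v \otimes ab)$ via multiplicativity of $\Delta$ and anti-multiplicativity of $S$ into $v_{(0)} \otimes a_{(2)} b_{(2)} \otimes S(b_{(1)}) S(a_{(1)}) v_{(1)} a_{(3)} b_{(3)}$, which coincides term-by-term with $w_{(0)} \leftarrow b_{(2)} \otimes S(b_{(1)}) w_{(1)} b_{(3)}$. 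Functoriality is immediate: for a comodule morphism $f : V \to W$ I set $L(f) = f \otimes \id_A$, which is manifestly right $A$-linear, and whose $A$-colinearity reduces to the colinearity of $f$ applied within the first tensor factor.

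The only real obstacle is notational: the coassociativity check requires tracking five Sweedler indices on $a$ and three on $v$ simultaneously, but no structural input beyond the Hopf algebra axioms and routine coassociativity juggling is needed.
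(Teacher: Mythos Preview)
Your proof is correct and follows exactly the approach the paper has in mind: the paper's own proof reads simply ``This is a direct verification,'' and you have carried out precisely that verification in detail, checking the coaction axioms, the Yetter--Drinfeld compatibility, and functoriality via straightforward Sweedler calculus.
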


\begin{proof}
 This is a direct verification.
\end{proof}

Note that when $V=\C$ is the trivial comodule, then $\C \boxtimes A= A_{\rm coad}$.

\begin{definition}
Let $A$ be a Hopf algebra.   A Yetter-Drinfeld module over $A$ is said to be free 
if it is isomorphic to $V \boxtimes A$ for some right $A$-comodule $V$.  
\end{definition}

Of course a free Yetter-Drinfeld is free as a right $A$-module.
The terminology is further justified by the following result.

\begin{proposition}\label{adjointfunctor}
 Let $A$ be a Hopf algebra. The functor
  $ L = - \boxtimes A: \mathcal M^A \longrightarrow \yd_A^A$ is left adjoint to
  the forgetful functor $ R :   \yd_A^A \longrightarrow \mathcal M^A$. 
 In particular if $P$ is a projective object in $\mathcal M^A$, then 
 $ L(P)$ is a projective object in $\yd_A^A$.   
\end{proposition}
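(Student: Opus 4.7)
The plan is to exhibit the adjunction directly via a natural bijection
\[
\Hom_{\yd_A^A}(V \boxtimes A, M) \cong \Hom_{\mathcal M^A}(V, R(M))
\]
by writing down candidate unit and counit morphisms and checking they satisfy the triangle identities (equivalently, that the associated transformations on Hom sets are mutually inverse). The natural choices are the unit $\eta_V : V \to V \boxtimes A$, $v \mapsto v \otimes 1$, and the counit $\varepsilon_M : M \boxtimes A \to M$, $m \otimes a \mapsto m \leftarrow a$; the associated bijection sends $f \in \Hom_{\mathcal M^A}(V, R(M))$ to $\tilde f(v \otimes a) = f(v) \leftarrow a$, with inverse $g \mapsto g(-\otimes 1)$.

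The main work is to verify that these candidate maps live in the right categories. For $\eta_V$ one just checks that $v \otimes 1$ is coacted on trivially in the second tensor factor of $V \boxtimes A$, reducing the boxtimes coaction to $v \mapsto v_{(0)} \otimes v_{(1)}$. For $\tilde f$, $A$-linearity is immediate from the definition of the $A$-action on $V \boxtimes A$ as right multiplication, while $A$-colinearity is precisely where the Yetter-Drinfeld compatibility on $M$ is needed: expanding $(f(v) \leftarrow a)_{(0)} \otimes (f(v) \leftarrow a)_{(1)}$ with the YD condition and using that $f$ itself is $A$-colinear matches the coaction on $V \boxtimes A$ applied before $\tilde f$. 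This is the only nontrivial calculation, and I expect it to be the main (though quite routine) obstacle. The two round-trip composites $f \mapsto \tilde f \mapsto \tilde f(-\otimes 1)$ and $g \mapsto g(-\otimes 1) \mapsto \widetilde{g(-\otimes 1)}$ are easily seen to be the identity using $x \leftarrow 1 = x$ and the $A$-linearity of $g$ respectively, and naturality in both $V$ and $M$ is formal.

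For the second assertion, given $P$ projective in $\mathcal M^A$, the adjunction gives a natural isomorphism of functors
\[
\Hom_{\yd_A^A}(L(P), -) \cong \Hom_{\mathcal M^A}(P, R(-)).
\]
The forgetful functor $R$ is exact since short exact sequences in both $\yd_A^A$ and $\mathcal M^A$ are detected at the level of underlying vector spaces, and $\Hom_{\mathcal M^A}(P, -)$ is exact by projectivity of $P$. The composition of exact functors is exact, whence $\Hom_{\yd_A^A}(L(P), -)$ is exact, proving that $L(P)$ is projective in $\yd_A^A$.
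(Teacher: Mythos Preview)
Your proposal is correct and follows essentially the same approach as the paper: the paper writes down exactly the same map $f \mapsto \tilde f$, $\tilde f(v\otimes a)=f(v)\leftarrow a$, declares it a natural isomorphism by ``direct verification'', and handles the projectivity statement by citing the standard fact (Weibel, Proposition~2.3.10) that left adjoints to exact functors preserve projectives. You simply spell out the details the paper leaves implicit --- the inverse map, the unit/counit, the role of the Yetter--Drinfeld condition in checking colinearity, and the exactness of $R$ --- but there is no difference in strategy.
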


\begin{proof}
Let $V \in \mathcal M^A$ and $X \in \mathcal \yd_A^A$. It is a direct verification to check
that we have a natural isomorphism
\begin{align*}
\Hom_{\mathcal M^A}(V, R(X))  &\longrightarrow  \Hom_{\yd_A^A}(V \boxtimes A, X)
  \\
 f &\longmapsto \tilde{f}, \ \tilde{f}(v\otimes a)=f(v)\leftarrow a
 \end{align*}
 and thus $L = -\boxtimes A$ is left adjoint the forgetful functor $R$.
The last assertion is a standard fact, see e.g. \cite{wei}, Proposition 2.3.10.
\end{proof}

It is worth to note that the existence of a left adjoint functor to  the forgetful functor $R :   \yd_A^A \longrightarrow \mathcal M^A$ follows from the general situations studied in \cite{camizh}.

Recall \cite{lin} that the category $\mathcal M^A$
of right $A$-comodules has enough projectives if and only if $A$ is co-Frobenius ($A$ is said to be co-Frobenius is there exists a non-zero right $A$-colinear map $A \rightarrow \C$).

\begin{corollary}\label{perfect}
 Let $A$ be a  co-Frobenius Hopf algebra.  Then the category $\yd_A^A$ has enough projective objects. 
\end{corollary}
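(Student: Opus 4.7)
The plan is to deduce the corollary directly from Proposition \ref{adjointfunctor} together with Lin's theorem on co-Frobenius Hopf algebras. Given any Yetter-Drinfeld module $X \in \yd_A^A$, I would first apply the forgetful functor to obtain the right $A$-comodule $R(X)$. Since $A$ is co-Frobenius, by Lin's theorem the category $\mathcal{M}^A$ has enough projectives, so I can choose a projective comodule $P \in \mathcal{M}^A$ together with a surjective comodule map $\pi : P \twoheadrightarrow R(X)$.

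Next, I would use the adjunction isomorphism from Proposition \ref{adjointfunctor} to transport $\pi$ to a Yetter-Drinfeld morphism $\tilde{\pi} : L(P) = P \boxtimes A \longrightarrow X$, given by the explicit formula $\tilde{\pi}(p \otimes a) = \pi(p) \leftarrow a$. To see that $\tilde{\pi}$ is still surjective, it suffices to evaluate at $a = 1$: for every $x \in X$ we can write $x = \pi(p)$ for some $p \in P$, and then $x = \tilde{\pi}(p \otimes 1)$. Thus $\tilde{\pi}$ is surjective in $\yd_A^A$.

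Finally, since $P$ is projective in $\mathcal{M}^A$, the last assertion of Proposition \ref{adjointfunctor} guarantees that $L(P) = P \boxtimes A$ is projective in $\yd_A^A$. We have therefore produced, for an arbitrary $X \in \yd_A^A$, a surjection onto $X$ from a projective object of $\yd_A^A$, which is exactly the statement that $\yd_A^A$ has enough projectives.

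There is no real obstacle here: everything has been prepared by Proposition \ref{adjointfunctor}, and the only point that needs a line of verification is the surjectivity of $\tilde{\pi}$, which is immediate from its explicit formula. The proof is essentially a formal consequence of the adjunction and the fact that left adjoints preserve projectives (since their right adjoint, the forgetful functor $R$, is exact).
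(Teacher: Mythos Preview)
Your proposal is correct and follows essentially the same route as the paper's proof: take a projective cover $P\twoheadrightarrow R(X)$ in $\mathcal M^A$ (available by Lin's theorem since $A$ is co-Frobenius), transport it via the adjunction of Proposition~\ref{adjointfunctor} to the Yetter--Drinfeld map $\tilde{\pi}(p\otimes a)=\pi(p)\leftarrow a$, and note that $L(P)$ is projective. Your explicit verification of surjectivity at $a=1$ is the only detail you add beyond the paper's sketch, and it is exactly the right one.
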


\begin{proof}
 Let $V \in \yd_A^A$ and let $P$ be  a projective object in $\mathcal M^A$ with an epimorphism $f : P\twoheadrightarrow R(V)$. We have a surjective morphism of Yetter-Drinfeld modules 
\begin{align*}
L(P) = P \boxtimes A &\twoheadrightarrow V \\
x \otimes a & \mapsto  (f(x)\leftarrow a)
\end{align*}  with $L(P)$ projective, and we are done.
\end{proof}

\begin{definition}\label{freeres}
 Let $A$ be a Hopf algebra and let $M \in \yd_A^A$. A free Yetter-Drinfeld  resolution 
 of $M$ consists
of a complex of free Yetter-Drinfeld modules 
$$\mathbf P_. = \cdots P_{n+1} \rightarrow P_n \rightarrow \cdots \rightarrow P_1 \rightarrow P_0\rightarrow 0$$ for which there exists a  Yetter-Drinfeld module
map $\epsilon : P_0 \rightarrow M$ such that 
$$\cdots P_{n+1} \rightarrow P_n \rightarrow \cdots \rightarrow P_1 \rightarrow P_0\overset{\epsilon}\rightarrow M \rightarrow 0$$
is an exact sequence.
\end{definition}

In particular a  free Yetter-Drinfeld resolution of $M$ is a resolution of  $M$ (as a right $A$-module)
by free $A$-modules.
A basic motivation for considering this special kind of resolutions comes from the fact that the standard
resolution of the counit is in fact a free Yetter-Drinfeld resolution. 
Before making this statement precise, we need the following construction.

For any $n \in \mathbb N$, we define the comodule
$A^{\boxtimes n}$ as follows:
$$A^0=\C, \ A^{\boxtimes 1} = \C \boxtimes A=A_{\rm coad}, \ A^{\boxtimes 2} = A^{\boxtimes 1} \boxtimes
A, \ \ldots, A^{\boxtimes (n+1)} = A^{\boxtimes n} \boxtimes A, \ldots$$ 
It is straighforward to check that after the obvious vector space identification of 
$A^{\boxtimes n}$ with $A^{\otimes n}$, the right $A$-module structure of $A^{\boxtimes n}$
is given by right multiplication and its comodule structure is given by
\begin{align*}
 {\rm ad}_r^{(n)} : A^{\boxtimes n}&\longrightarrow A^{\boxtimes n} \otimes A \\
 a_1 \otimes \cdots \otimes a_n &\longmapsto a_{1(2)} \otimes \cdots \otimes a_{n(2)}
 \otimes S(a_{1(1)} \cdots a_{n(1)}) a_{1(3)} \cdots a_{n(3)} 
\end{align*}

\begin{proposition}
 Let $A$ be Hopf algebra. The standard resolution of the counit of $A$ is a resolution of $\mathbb C$ by free Yetter-Drinfeld modules.
\end{proposition}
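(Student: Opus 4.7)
My plan is to identify each $A^{\otimes n}$ appearing in the standard resolution with the iterated object $A^{\boxtimes n}$ constructed just above, and then verify three things; exactness as a complex is inherited from the known exactness of the standard resolution as a complex of right $A$-modules. First, freeness of each term is built into the construction, since $A^{\boxtimes n} = A^{\boxtimes n-1}\boxtimes A = L(A^{\boxtimes n-1})$ for $n\geq 1$ (with $A^{\boxtimes 1} = \C\boxtimes A = A_{\rm coad}$), so $A^{\boxtimes n}$ is a free Yetter-Drinfeld module in the sense of Definition \ref{freeres} with underlying comodule $A^{\boxtimes n-1}$. Second, each bar differential $d\colon A^{\boxtimes n+1}\to A^{\boxtimes n}$ must be shown to be a morphism in $\yd_A^A$, and third, the augmentation $\varepsilon\colon A_{\rm coad}\to \C$ must be shown to be a morphism in $\yd_A^A$ when $\C$ carries the trivial Yetter-Drinfeld structure.

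Right $A$-linearity of the differential is immediate: the YD-action on $A^{\boxtimes n}$ is right multiplication on the last tensor slot, and each summand of $d$ either leaves the last slot untouched or acts by left multiplication on it, so $d$ commutes with right multiplication by $A$. For colinearity I would decompose $d = d_0 + \sum_{i=1}^{n}(-1)^i d_i$, where $d_0 = \varepsilon \otimes \id^{\otimes n}$ and $d_i = \id^{\otimes i-1}\otimes \mu \otimes \id^{\otimes n-i}$ for $i\geq 1$, and check that each summand intertwines ${\rm ad}_r^{(n+1)}$ and ${\rm ad}_r^{(n)}$. For $d_i$ with $i\geq 1$ the verification is literally multiplicativity of the three-fold coproduct, $(a_ia_{i+1})_{(k)} = a_{i(k)}a_{i+1(k)}$ for $k=1,2,3$, substituted into the explicit formula for the coaction. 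For $d_0$ the key step is that $\varepsilon(a_{1(2)})$ collapses the middle Sweedler leg of $\Delta^{(2)}(a_1)$; after relabeling the two remaining legs as an ordinary coproduct, the factor $S(a_{1(1)})a_{1(2)} = \varepsilon(a_1)\cdot 1$ appears inside the rightmost $A$-slot, producing exactly the $\varepsilon(a_1)$ that occurs in ${\rm ad}_r^{(n)}(d_0(\cdot))$. The same antipode identity shows that $\varepsilon\colon A_{\rm coad}\to \C$ is colinear, since $\varepsilon(a_{(2)}) S(a_{(1)}) a_{(3)} = S(a_{(1)})a_{(2)} = \varepsilon(a)\cdot 1$, while its right linearity is automatic as $\varepsilon$ is an algebra map.

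No step presents a genuine obstacle; the substance is just the multiplicativity of $\Delta$ and the antipode identity $S*\id = \varepsilon$. The only bookkeeping point worth noting is the relabeling needed in the $d_0$ verification: after $\varepsilon$ kills $a_{1(2)}$ inside an iterated coproduct that still involves the triple Sweedler expansions of $a_2,\ldots,a_{n+1}$, one must be careful not to confuse $a_{1(1)}$ and $a_{1(3)}$ (from the three-fold coproduct of $a_1$) with the Sweedler components of the ordinary coproduct to which they collapse.
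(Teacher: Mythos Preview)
Your proposal is correct and follows essentially the same approach as the paper: the paper's proof simply asserts that each differential $A^{\boxtimes(n+1)}\to A^{\boxtimes n}$ is a morphism of Yetter-Drinfeld modules ``by a direct verification'' and notes that each $A^{\boxtimes(n+1)}$ is free by construction. Your write-up carries out exactly that direct verification, organized summand-by-summand via the decomposition $d = d_0 + \sum_i (-1)^i d_i$, and also makes explicit the check for the augmentation $\varepsilon\colon A_{\rm coad}\to\C$ (which in the paper's formulation is the case $n=0$ of the same claim).
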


\begin{proof} It is a direct verification to check that for any $n \geq 0$, the map
 \begin{align*}
 A^{\boxtimes (n+1)} &\longrightarrow A^{\boxtimes n} \\
 a_1 \otimes \cdots \otimes a_{n+1} &\longmapsto \varepsilon(a_1) a_2 \otimes \cdots \otimes a_{n+1} + 
 \sum_{i=1}^n(-1)^i a_1 \otimes \cdots \otimes a_i a_{i+1} \otimes \cdots \otimes a_{n+1}
\end{align*}
is a morphism of Yetter-Drinfeld modules. This gives the result since the Yetter-Drinfeld modules $A^{\boxtimes (n+1)}$
are free by construction. 
\end{proof}


We close the section by recording the following elementary result, to be used in Section 5. The proof is left to the reader.

\begin{lemma}\label{Phi}
Let $A$ be a Hopf algebra,  let $V$ be finite-dimensional $A$-comodule with coaction $\alpha : V \rightarrow V \otimes A$, with basis $e_1, \ldots , e_n$, and let  
$(u_{ij}) \in M_n(A)$ be such that $\alpha(e_i)=\sum_{k}e_k \otimes u_{ki}$. The linear maps
 \begin{align*}
  \Phi_V^1 : V^*\otimes V &\longrightarrow \C \boxtimes A = A_{\rm coad} \\
  e_{i}^* \otimes e_j & \longmapsto u_{ij} 
 \end{align*}
\begin{align*}
  \Phi_V^2 : V^*\otimes V^* \otimes V \otimes V &\longrightarrow (V^* \otimes V)  \boxtimes A \\
  e_{i}^* \otimes e_j^* \otimes e_k \otimes e_l& \longmapsto e_j^* \otimes e_k \otimes u_{il} 
 \end{align*}
 are $A$-colinear.
\end{lemma}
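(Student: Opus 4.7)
The plan is to verify both assertions by direct computation after unpacking all the coactions involved. First I would fix notation: the right $A$-coaction on the dual comodule $V^*$ is given by $e_i^* \longmapsto \sum_a e_a^* \otimes S(u_{ia})$, and the coactions on tensor products come from the diagonal comultiplication of $A$. On $\C \boxtimes A = A_{\mathrm{coad}}$ the coaction is the right adjoint coaction $a \longmapsto a_{(2)} \otimes S(a_{(1)})a_{(3)}$, and on $(V^* \otimes V) \boxtimes A$ it is the one prescribed by the previous proposition, namely $v \otimes a \longmapsto v_{(0)} \otimes a_{(2)} \otimes S(a_{(1)}) v_{(1)} a_{(3)}$.

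For $\Phi_V^1$, I would start from $e_i^* \otimes e_j$ and apply the coaction of $V^* \otimes V$ first, obtaining $\sum_{k,l} e_k^* \otimes e_l \otimes S(u_{ik}) u_{lj}$, so that postcomposing with $\Phi_V^1 \otimes \id$ gives $\sum_{k,l} u_{kl} \otimes S(u_{ik}) u_{lj}$. On the other side, $\Phi_V^1$ sends $e_i^* \otimes e_j$ to $u_{ij}$, and the adjoint coaction applied to $u_{ij}$ reads, after using the iterated formula $\Delta^{(2)}(u_{ij}) = \sum_{k,l} u_{ik} \otimes u_{kl} \otimes u_{lj}$, precisely as $\sum_{k,l} u_{kl} \otimes S(u_{ik}) u_{lj}$. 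The two expressions coincide, giving colinearity.

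For $\Phi_V^2$, the computation is structurally the same but with more indices. Applying the coaction of $V^* \otimes V^* \otimes V \otimes V$ to $e_i^* \otimes e_j^* \otimes e_k \otimes e_l$ yields
\[
\sum_{a,b,c,d} e_a^* \otimes e_b^* \otimes e_c \otimes e_d \otimes S(u_{ia})S(u_{jb}) u_{ck} u_{dl},
\]
and then $\Phi_V^2$ produces $\sum_{a,b,c,d} e_b^* \otimes e_c \otimes u_{ad} \otimes S(u_{ia})S(u_{jb}) u_{ck} u_{dl}$. In the other order, $\Phi_V^2$ first gives $e_j^* \otimes e_k \otimes u_{il}$; writing the coaction of $V^* \otimes V$ on $e_j^* \otimes e_k$ as $\sum_{b,c} e_b^* \otimes e_c \otimes S(u_{jb}) u_{ck}$ and expanding $\Delta^{(2)}(u_{il}) = \sum_{p,q} u_{ip} \otimes u_{pq} \otimes u_{ql}$, the formula for the coaction of $(V^*\otimes V)\boxtimes A$ delivers $\sum_{b,c,p,q} e_b^* \otimes e_c \otimes u_{pq} \otimes S(u_{ip})S(u_{jb}) u_{ck} u_{ql}$. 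After the relabeling $a \leftrightarrow p$, $d \leftrightarrow q$ this matches the first expression verbatim.

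There is no conceptual obstacle: both verifications reduce to the identity $\Delta^{(2)}(u_{ij}) = \sum_{k,l} u_{ik} \otimes u_{kl} \otimes u_{lj}$ together with the multiplicativity of $S$ on the relevant terms. The only thing to be careful about is the bookkeeping of the indices and the order of tensor factors in the two coactions involved in $\Phi_V^2$, since one must remember that the $A$-factor in $(V^* \otimes V) \boxtimes A$ lies between the comodule factor and the "adjoint" correction $S(a_{(1)}) v_{(1)} a_{(3)}$.
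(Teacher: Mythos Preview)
Your proof is correct; the computations for both $\Phi_V^1$ and $\Phi_V^2$ check out. The paper itself does not supply a proof of this lemma, stating only that it is left to the reader, so your direct verification is exactly the intended argument.
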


\section{Equivalences between tensor categories of comodules}

In this section we present the technical core of the paper: the fact 
that  if $A$ and $H$ are Hopf algebras that have equivalent tensor categories of comodules,
then one can transport a free  Yetter-Drinfeld resolution of the counit of $A$ to 
the same kind of resolution for the counit of $H$ (with preservation of the length of the resolution).
The precise result is as follows.

\begin{theorem}\label{equivyetter}
 Let $A$ and $H$ be some Hopf algebras. Assume that there exists an equivalence of linear tensor categories
 $\Theta  : \mathcal M^A \simeq^{\otimes} \mathcal M^H$. Then $\Theta$ induces an equivalence of linear tensor categories
 $\widehat{\Theta} : \yd_{A}^A \simeq^\otimes \yd_H^H$ together with, for all $V \in \mathcal M^A$, natural isomorphisms
 $$\widehat{\Theta}(V \boxtimes A) \simeq \Theta(V) \boxtimes H$$
The functor $\widehat{\Theta}$ associates to any free Yetter-Drinfeld resolution of the counit of $A$
$$V_\cdot \boxtimes A :\cdots V_{n+1} \boxtimes A \rightarrow V_n \boxtimes A\rightarrow \cdots \rightarrow V_0\boxtimes A \rightarrow 0$$ a free Yetter-Drinfeld resolution of the counit of $H$
 $$\Theta(V_\cdot) \boxtimes H :\cdots \Theta(V_{n+1}) \boxtimes H \rightarrow \Theta(V_n) \boxtimes H\rightarrow \cdots \rightarrow\Theta(V_0)\boxtimes H \rightarrow 0$$
\end{theorem}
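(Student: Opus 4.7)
The plan is to reduce the theorem to two general facts: Schauenburg's theorem identifying $\yd_A^A$ with (a suitable version of) the monoidal center $\mathcal Z(\mathcal M^A)$ of the comodule category, and the 2-functoriality of the monoidal center, i.e.\ the fact that any $\C$-linear tensor equivalence $\Theta : \mathcal M^A \simeq^\otimes \mathcal M^H$ canonically induces a braided equivalence between the centers, transporting a half-braiding $c_{X,-} : X \otimes - \to - \otimes X$ to the half-braiding obtained by conjugating $\Theta(c)$ with the tensor structure of $\Theta$. Composing these two facts yields the sought equivalence $\widehat{\Theta} : \yd_A^A \simeq^\otimes \yd_H^H$, and by construction it intertwines with $\Theta$ along the forgetful functors: writing $R_A : \yd_A^A \to \mathcal M^A$ and $R_H : \yd_H^H \to \mathcal M^H$ for the forgetful functors (discarding the half-braiding, equivalently the coaction), there is a natural isomorphism
\[
R_H \circ \widehat{\Theta} \; \simeq \; \Theta \circ R_A.
\]

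For the identification $\widehat{\Theta}(V \boxtimes A) \simeq \Theta(V) \boxtimes H$, I would invoke Proposition \ref{adjointfunctor}: the functor $L_A = -\boxtimes A$ is left adjoint to $R_A$, and similarly $L_H = -\boxtimes H$ is left adjoint to $R_H$. Since $\Theta$ and $\widehat{\Theta}$ are equivalences, they are simultaneously left and right adjoint to their quasi-inverses; combining this with the above commutation of the forgetful squares and the uniqueness of adjoints, one obtains a natural isomorphism of left adjoints
\[
\widehat{\Theta} \circ L_A \; \simeq \; L_H \circ \Theta,
\]
that is, $\widehat{\Theta}(V \boxtimes A) \simeq \Theta(V) \boxtimes H$, naturally in the right $A$-comodule $V$.

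The transport of resolutions is then immediate: $\widehat{\Theta}$, being an equivalence of abelian categories, is exact, so it preserves exact complexes. A tensor functor preserves the unit, so $\widehat{\Theta}(\C) \simeq \C$ (the trivial Yetter-Drinfeld module), and a free Yetter-Drinfeld resolution $V_\cdot \boxtimes A \to \C \to 0$ of the counit of $A$ is accordingly sent, via the previous step, to the exact complex $\Theta(V_\cdot) \boxtimes H \to \C \to 0$ in $\yd_H^H$; this is a free Yetter-Drinfeld resolution of the counit of $H$ of the same length, as required.

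The main obstacle I anticipate is making Schauenburg's identification $\yd_A^A \simeq \mathcal Z(\mathcal M^A)$ and its compatibility with tensor equivalences completely explicit in the present possibly infinite-dimensional setting, where the monoidal center must be taken with enough care for the half-braiding to be natural against all objects of $\mathcal M^A$, not merely finite-dimensional ones. An alternative route that sidesteps the center altogether uses the Schauenburg bi-Galois classification: the equivalence $\Theta$ is of the form $V \mapsto V \square_A Z$ for some $H$-$A$-bi-Galois object $Z$, and one can then define $\widehat{\Theta}$ directly on the underlying module/comodule data and verify the Yetter-Drinfeld condition together with the intertwining formula above by a bare-hands but routine computation. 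Once the equivalence $\widehat{\Theta}$ is produced and the commutation with the forgetful functors is in place, everything else in the theorem is a formal consequence.
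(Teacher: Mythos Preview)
Your proposal is correct and follows essentially the same approach as the paper: the paper also invokes the description of $\yd_A^A$ as the (weak) center of $\mathcal M^A$ to obtain $\widehat{\Theta}$ with $R_H\widehat{\Theta}\simeq\Theta R_A$, and then derives $\widehat{\Theta}L_A\simeq L_H\Theta$ from uniqueness of adjoints (the paper spells out the chain of Hom-isomorphisms explicitly, whereas you phrase it more abstractly). Your remark about the bi-Galois alternative is also on target: the paper develops exactly that explicit description in a subsequent theorem, for use in the concrete computations of Section~5.
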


\begin{proof}
 Let $R_A : \yd_{A}^A \longrightarrow \mathcal M^A$ and $R_H : \yd_{H}^H \longrightarrow  \mathcal M^H$ be the respective forgetful functors with their respective left adjoint 
  $L_A : \mathcal M^A \longrightarrow \yd_A^A$ and  $L_H : \mathcal M^H \longrightarrow \yd_H^H$. The description of $\yd_A^A$ as the weak center of the monoidal category $\mathcal M_A$ (see e.g. \cite{scsurv}, \cite{kas})
ensures the existence of an equivalence of linear tensor categories $\widehat{\Theta} : \yd_{A}^A \simeq \yd_H^H$ such that $R_H \widehat{\Theta} \simeq \Theta R_A$ as functors. Denote by $\Theta^{-1}$ a quasi-inverse of $\Theta$. Then we have, for any $U \in \mathcal M^H$ and $X \in \yd_H^H$, natural isomorphisms
  \begin{align*}
   \Hom_{\yd_H^H}(\widehat{\Theta} L_A \Theta^{-1}(U), X) & \simeq \Hom_{\yd_A^A}(L_A \Theta^{-1}(U), \widehat{\Theta}^{-1}(X))\\
   & \simeq \Hom_{\mathcal M^A}(\Theta^{-1}(U), R_A\widehat{\Theta}^{-1}(X)) \\
   & \simeq \Hom_{\mathcal M^A}(\Theta^{-1}(U), \Theta^{-1}R_B(X)) \\
   & \simeq  \Hom_{\mathcal M^H}(U, \mathcal R_B(X))
  \end{align*}
The uniqueness of adjoint functors ensures that $\widehat{\Theta}  L_A \Theta^{-1} \simeq L_H$, so that $\widehat{\Theta} L_A \simeq  L_H \Theta$, as required. The last assertion is then immediate.
\end{proof}

In the next section, in order to transport an explicit resolution, we will need to
know the explicit form of a tensor equivalence $\Theta  : \mathcal M^A \simeq^{\otimes} \mathcal M^H$
and of the associated  tensor equivalence $\widehat{\Theta} : \yd_{A}^A \simeq \yd_H^H$.

It was shown by Schauenburg \cite{sc1} that  equivalences of linear tensor categories $\mathcal M^A \simeq^{\otimes} \mathcal M^H$ always arise from Hopf $A$-$H$-bi-Galois objects. 
The axioms of Hopf bi-Galois objects were symmetrized \cite{bi2,gru2}, leading to the use of the language of cogroupoids \cite{bicogro}, that  we now recall.  

First recall that a cocategory  $\coc$ consists of:

\noindent
$\bullet$ a set of objects ${\rm ob}(\coc)$.

\noindent
$\bullet$ For any $X,Y \in {\rm ob}(\coc)$, an algebra 
$\coc(X,Y)$. 

\noindent
$\bullet$ For any $X,Y,Z \in {\rm ob}(\coc)$, algebra morphisms
$$\Delta_{X,Y}^Z : \coc(X,Y) \longrightarrow \coc(X,Z) \otimes \coc(Z,Y)
\quad {\rm and} \quad \varepsilon_X : \coc(X,X) \longrightarrow \C$$
such that some natural coassociativity and counit diagrams (dual to the usual associativity and unit diagrams in a category) commute.

A cogroupoid  $\coc$ consists of a cocategory $\coc$ together
with, for any $X,Y \in {\rm ob}(\coc)$, linear maps
$$S_{X,Y} : \coc(X,Y) \longrightarrow \coc(Y,X)$$
such that natural diagrams (dual to the invertibility diagrams in a groupoid) commute.
A cogroupoid with a single object is precisely a Hopf algebra.
A cogroupoid is said to be connected if for any $X,Y \in {\rm ob}(\coc)$, the algebra
$\mathcal C(X,Y)$ is non-zero. 

The following theorem is the cogroupoid reformulation of Schauenburg's results in \cite{sc1}, 
see \cite{bicogro}.

\begin{theorem}\label{equivcomod}
Let $\coc$ be a connected cogroupoid. Then for any $X,Y \in {\rm ob}(\coc)$
we have  linear equivalences of tensor categories that are inverse of each other
\begin{align*}
\mathcal M^{\coc(X,X)} & \simeq^\otimes \mathcal M^{\coc(Y,Y)} 
\quad & \mathcal M^{\coc(Y,Y)} & \simeq^\otimes \mathcal M^{\coc(X,X)}\\
V &\longmapsto V \square_{\coc(X,X)} \coc(X,Y) \quad 
& V &\longmapsto V \square_{\coc(Y,Y)} \coc(Y,X)
\end{align*} 
Conversely, if $A$ and $H$ are Hopf algebras such that $\mathcal M^A \simeq^{\otimes} \mathcal M^H$,
then there exists a connected cogroupoid with 2 objects $X$, $Y$ such that $A = \mathcal C(X,X)$ and $B=\mathcal C(Y,Y)$. 
\end{theorem}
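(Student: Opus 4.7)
The plan is to recognize that the cogroupoid language is just a repackaging of Schauenburg's Hopf bi-Galois objects from \cite{sc1}, so the theorem reduces to a translation exercise together with an application of his equivalence theorem. Accordingly I would prove the two directions in parallel, first producing the bi-Galois data from a connected cogroupoid, and then reconstructing a cogroupoid from a bi-Galois object.

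For the forward direction, fix $X, Y \in \mathrm{ob}(\coc)$ and set $A = \coc(X,X)$, $B = \coc(Y,Y)$, $Z = \coc(X,Y)$. I would first check that $Z$ is an $A$-$B$-bicomodule algebra: the algebra maps $\Delta_{X,Y}^{X} : Z \to A \otimes Z$ and $\Delta_{X,Y}^{Y} : Z \to Z \otimes B$ give, respectively, a left $A$-coaction and a right $B$-coaction, and the cocategory coassociativity and counit axioms are exactly what is needed for the two coactions to commute and to be counital. Connectedness provides that $Z \neq 0$. The next step is to show that $Z$ is an $A$-$B$-bi-Galois object; here the antipode-like maps $S_{X,Y}$ and $S_{Y,X}$, combined with $\Delta_{X,X}^{Y} : A \to Z \otimes \coc(Y,X)$ and $\Delta_{Y,Y}^{X} : B \to \coc(Y,X) \otimes Z$, provide explicit inverses to the canonical Galois maps $Z \otimes Z \to A \otimes Z$ and $Z \otimes Z \to Z \otimes B$. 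In each case the verification amounts to chasing the cogroupoid diagrams (counit on one side, antipode relation on the other).

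Once $Z$ is identified as an $A$-$B$-bi-Galois object, Schauenburg's theorem \cite{sc1} immediately yields the tensor equivalence
\[
\mathcal{M}^{A} \simeq^\otimes \mathcal{M}^{B}, \qquad V \longmapsto V \square_{A} Z,
\]
with the monoidal structure coming from the multiplication of $Z$ combined with its bicomodule algebra structure. The quasi-inverse is cotensor product with the inverse bi-Galois object $Z^{-1}$, and it is exactly at this point that I would check that $\coc(Y,X)$ plays the role of $Z^{-1}$: the cogroupoid equations $S_{X,Y}$ translate into the natural isomorphisms $V \square_{A} Z \square_{B} \coc(Y,X) \simeq V$ (and symmetrically), completing the forward direction.

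For the converse, I would invoke the other half of Schauenburg's theorem: an equivalence of tensor categories $\mathcal{M}^{A} \simeq^\otimes \mathcal{M}^{H}$ is induced by some $A$-$H$-bi-Galois object $Z$, with inverse bi-Galois object $Z^{-1}$. I then set $\mathrm{ob}(\coc) = \{X,Y\}$, $\coc(X,X) = A$, $\coc(Y,Y) = H$, $\coc(X,Y) = Z$, $\coc(Y,X) = Z^{-1}$. The comultiplications $\Delta_{*,*}^{*}$ are read off the coactions on $Z$ and $Z^{-1}$ together with the isomorphisms $A \simeq Z \square_{H} Z^{-1}$ and $H \simeq Z^{-1} \square_{A} Z$ coming from the Galois condition; the antipodes $S_{X,Y}, S_{Y,X}$ come from the canonical flips between $Z$ and $Z^{-1}$ in Schauenburg's framework, and connectedness is automatic since $Z, Z^{-1} \neq 0$. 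The main obstacle is mechanical but real: one must verify that every cocategory axiom (coassociativity of all three $\Delta_{X,Y}^{Z}$ variants, counit axioms) and every antipode diagram match, without direction or sign errors, the corresponding Galois identities. This is purely bookkeeping, but it is the bulk of the work and the reason the paper defers to \cite{bicogro} for the full verification.
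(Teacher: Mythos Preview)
Your proposal is correct and matches the paper's treatment: the paper does not actually prove this theorem but simply states it as ``the cogroupoid reformulation of Schauenburg's results in \cite{sc1}, see \cite{bicogro}'', which is precisely the translation exercise you outline. You have correctly identified both the content of the argument (bi-Galois objects $\leftrightarrow$ connected cogroupoids) and the reason the paper defers the details to \cite{bicogro}.
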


Here the symbol $\square$ stands for the cotensor product of a right comodule by a left comodule, see e.g. \cite{mon}. 

In order to extend the previous monoidal equivalences to categories of Yetter-Drinfeld modules, let us now recall Sweedler's notation for cocategories and cogroupoids.
Let $\coc$ be a cocategory. For $a^{XY} \in \coc(X,Y)$, we write 
$$\Delta_{X,Y}^Z(a^{XY})= a_{(1)}^{XZ} \otimes a_{(2)}^{ZY}$$
The cocategory axioms are
$$(\Delta_{X,Z}^T \otimes 1)\circ \Delta_{X,Y}^Z(a^{XY}) = 
a_{(1)}^{XT} \otimes a_{(2)}^{TZ} \otimes a_{(3)}^{ZY}= 
(1 \otimes \Delta_{T,Y}^Z)\circ \Delta_{X,Y}^T(a^{XY})
$$
$$\varepsilon_X(a_{(1)}^{XX}) a_{(2)}^{XY} = a^{XY} =
\varepsilon_Y(a_{(2)}^{YY})  a_{(1)}^{XY}$$
and the additional cogroupoid axioms are
$$S_{X,Y}(a_{(1)}^{XY}) a_{(2)}^{YX} = \varepsilon_X(a^{XX})1 =
a_{(1)}^{XY} S_{Y,X}(a_{(2)}^{YX})
$$
The following result is Proposition 6.2 in \cite{bicogro}. 

\begin{proposition}
 Let $\coc$ be cogroupoid, let $X,Y \in {\rm ob}(\coc)$ and let $V$
be a right $\coc(X,X)$-module.
\begin{enumerate}
 \item $V \otimes \coc(X,Y)$ has a right $\coc(Y,Y)$-module structure
defined by
$$\left(v \otimes a^{XY}\right) \leftarrow b^{YY} = v\leftarrow b_{(2)}^{XX}
\otimes S_{YX}(b_{(1)}^{YX})a^{XY}b_{(3)}^{XY}$$ 
Endowed with the right $\coc(Y,Y)$-comodule defined by $1 \otimes \Delta_{X,Y}^Y$,
$V \otimes \coc(X,Y)$ is a Yetter-Drinfeld module over $\coc(Y,Y)$.
\item  If moreover $V$ is Yetter-Drinfeld module, then $V \square_{\coc(X,X)} \coc(X,Y)$ is Yetter-Drinfeld submodule
of $V \otimes \coc(X,Y)$.  
\end{enumerate}
\end{proposition}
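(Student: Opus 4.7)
The plan is a direct verification in cogroupoid Sweedler notation, with no conceptual ingredient beyond the cogroupoid axioms themselves. For part (1), I would first check that the formula $(v \otimes a^{XY}) \leftarrow b^{YY} = v \leftarrow b_{(2)}^{XX} \otimes S_{Y,X}(b_{(1)}^{YX}) a^{XY} b_{(3)}^{XY}$ defines a right $\coc(Y,Y)$-module structure. The unit axiom reduces, via $\Delta^X_{Y,Y}(1) = 1 \otimes 1$ and $S_{Y,X}(1)=1$, to $v \otimes a^{XY}$. Associativity follows from each $\Delta^Z$ being an algebra morphism (so the Sweedler factors of $bc$ distribute as products of those of $b$ and $c$) together with the anti-multiplicativity of $S_{Y,X}$. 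That $1 \otimes \Delta^Y_{X,Y}$ is a right $\coc(Y,Y)$-coaction is an immediate consequence of the coassociativity and counit axioms of the cocategory.

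The main step of (1) is the Yetter-Drinfeld compatibility condition. I would expand both sides using the action and coaction formulas, and then use that $\Delta^Y_{X,Y}$ is an algebra morphism to distribute it across the product $S_{Y,X}(b_{(1)}^{YX}) a^{XY} b_{(3)}^{XY}$ appearing in the action. The single identity required beyond the obvious ones is the cogroupoid analogue of anti-comultiplicativity of the antipode,
\[\Delta^Z_{X,Y}(S_{Y,X}(b^{YX})) = S_{Z,X}(b_{(2)}^{ZX}) \otimes S_{Y,Z}(b_{(1)}^{YZ}),\]
which is derived from the antipode and counit axioms just as in the Hopf algebra case. After substitution and repeated use of coassociativity, both sides reduce to the same iterated Sweedler expression in the factors of $b^{YY}$ and $a^{XY}$. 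I expect this to be the lengthiest step, but it is purely formal given the cogroupoid apparatus; the main pitfall will be tracking all upper indices correctly.

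For part (2), I would show that both the action and the coaction of (1) restrict to $V \square_{\coc(X,X)} \coc(X,Y)$. Stability under the coaction is immediate since the coaction acts only on the $\coc(X,Y)$-factor via $\Delta^Y_{X,Y}$, which commutes with the cotensor-defining map $\Delta^X_{X,Y}$ by coassociativity of the cocategory. Stability under the action is where the assumption that $V$ is a Yetter-Drinfeld module (not merely a comodule) is essential. Given $\sum_i v_i \otimes a_i \in V \square_{\coc(X,X)} \coc(X,Y)$, I would compute the $\coc(X,X)$-coaction of $\sum_i (v_i \leftarrow b_{(2)}^{XX}) \otimes S_{Y,X}(b_{(1)}^{YX}) a_i b_{(3)}^{XY}$ using the Yetter-Drinfeld condition on $V$, apply the cotensor relation on $V$ to turn the $v_{i(1)}$-factor into $a_{i(1)}^{XX}$, and then reassemble using the antipode-comultiplication formula above and coassociativity. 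The resulting expression matches $\Delta^X_{X,Y}$ applied to the $\coc(X,Y)$-factor, verifying that the action preserves the cotensor product. As in part (1), the main obstacle is index bookkeeping rather than any conceptual subtlety.
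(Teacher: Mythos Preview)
Your approach is correct and is the standard direct verification. Note, however, that the paper itself does not supply a proof of this proposition: it merely records the statement and attributes it to Proposition~6.2 of \cite{bicogro}. So strictly speaking there is no ``paper's own proof'' to compare against, and your write-up would in fact fill in what the paper omits. The strategy you outline --- checking the module axioms via multiplicativity of the $\Delta^Z$ and anti-multiplicativity of $S_{Y,X}$, verifying the Yetter--Drinfeld compatibility by expanding both sides and invoking the cogroupoid identity $\Delta^Z_{X,Y}\bigl(S_{Y,X}(b^{YX})\bigr)=S_{Z,X}(b_{(2)}^{ZX})\otimes S_{Y,Z}(b_{(1)}^{YZ})$, and then handling part~(2) by showing the action preserves the cotensor product via the Yetter--Drinfeld condition on $V$ --- is exactly how one proves this from the axioms, and is presumably the content of the cited reference. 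Your identification of the anti-comultiplicativity formula as the one non-obvious ingredient is accurate, and your remark that the only real difficulty is index bookkeeping is fair.
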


We now can write down the explicit form of the tensor equivalence between categories of Yetter-Drinfeld modules
induced by a tensor equivalence between categories of comodules.  

\begin{theorem}\label{expliyetter}
 Let $\coc$ be connected cogroupoid. Then for any $X,Y \in {\rm ob}(\coc)$,
the functor
\begin{align*}
 \yd_{\coc(X,X)}^{\coc(X,X)} & \longrightarrow \yd_{\coc(Y,Y)}^{\coc(Y,Y)} \\
V &\longmapsto V \square_{\coc(X,X)} \coc(X,Y)
\end{align*}
is an equivalence of linear tensor categories. Moreover we have natural isomorphisms
\begin{align*}
 \left(V \square_{\coc(X,X)}\coc(X,Y)\right) \boxtimes \coc(Y,Y) 
 & \longrightarrow \left(V \boxtimes \coc(X,X)\right)\square_{\coc(X,X)} \coc(X,Y)\\
 v \otimes a^{XY} \otimes b^{YY} &\longmapsto v \otimes b_{(2)}^{XX} \otimes S_{Y,X}(b_{(1)}^{YX})a^{XY}b_{(3)}^{XY}
\end{align*}
\end{theorem}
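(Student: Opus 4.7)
My plan is to establish the tensor equivalence first, and then verify the explicit form of the natural isomorphism.

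For the first assertion, I combine Theorem \ref{equivcomod} with the preceding proposition. Theorem \ref{equivcomod} gives the tensor equivalence $\Theta: V \mapsto V \square_{\coc(X,X)} \coc(X,Y)$ on comodule categories, and the preceding proposition endows $\Theta(V)$ with a Yetter--Drinfeld structure whenever $V$ has one, yielding a functor $\widetilde\Theta: \yd_{\coc(X,X)}^{\coc(X,X)} \to \yd_{\coc(Y,Y)}^{\coc(Y,Y)}$ that lifts $\Theta$ and commutes with the forgetful functors. Applying the same construction with $X$ and $Y$ swapped produces a candidate quasi-inverse $\widetilde\Theta': W \mapsto W \square_{\coc(Y,Y)} \coc(Y,X)$. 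One then verifies that the natural isomorphisms $\widetilde\Theta' \circ \widetilde\Theta \simeq \mathrm{id}$ and $\widetilde\Theta \circ \widetilde\Theta' \simeq \mathrm{id}$---already known at the comodule level from Theorem \ref{equivcomod}---are in fact Yetter--Drinfeld linear. Alternatively and more cleanly, Theorem \ref{equivyetter} produces an abstract lift $\widehat\Theta$, and the uniqueness of the lift of a tensor equivalence through the forgetful functors (up to tensor natural isomorphism, via the weak-center description of $\yd^A_A$) identifies $\widetilde\Theta$ with $\widehat\Theta$, giving a tensor equivalence.

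For the explicit natural isomorphism, let $\phi$ denote the proposed map. The verification proceeds in three steps. First, $\phi$ takes values in the cotensor product: one must show that the right $\coc(X,X)$-coaction on $V \boxtimes \coc(X,X)$ applied to $v \otimes b_{(2)}^{XX}$ agrees with the left $\coc(X,X)$-coaction $\Delta^X_{X,Y}$ applied to $S_{Y,X}(b_{(1)}^{YX}) a^{XY} b_{(3)}^{XY}$. Expanding both sides in cogroupoid Sweedler notation via iterated coassociativity on $b^{YY}$, the antipode identities of the cogroupoid collapse matching pairs so that both sides reduce to the same polynomial expression. Second, $\phi$ is $\coc(Y,Y)$-linear: using the module formula from the preceding proposition, both $\phi((v \otimes a^{XY} \otimes b^{YY}) \leftarrow f^{YY})$ and $\phi(v \otimes a^{XY} \otimes b^{YY}) \leftarrow f^{YY}$ reduce to $v \otimes b_{(2)}^{XX} f_{(2)}^{XX} \otimes S(f_{(1)}^{YX}) S(b_{(1)}^{YX}) a^{XY} b_{(3)}^{XY} f_{(3)}^{XY}$, the key input being that the iterated coproduct is an algebra morphism and $S$ is anti-multiplicative. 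Colinearity and naturality in $V$ are direct consequences of the cogroupoid axioms and the pointwise nature of the formula.

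The main obstacle is producing an explicit two-sided inverse. Abstract existence is guaranteed by Theorem \ref{equivyetter}, which yields a natural isomorphism $\widehat\Theta \circ L_A \simeq L_H \circ \Theta$ where $L_A, L_H$ are the free Yetter--Drinfeld functors of Proposition \ref{adjointfunctor}; this translates to the required isomorphism. The concrete inverse is obtained by applying an iterated coproduct to one of the tensor factors of an element $v \otimes c^{XX} \otimes d^{XY}$ in the cotensor product and using the antipode maps $S_{X,Y}$ to extract a $\coc(Y,Y)$-piece while reassembling the $\coc(X,Y)$-piece; the cotensor relation on the source is essential both for well-definedness of the inverse and for verifying that $\phi \circ \phi^{-1}$ and $\phi^{-1} \circ \phi$ reduce to the identity via the same coassociativity-and-antipode manipulations used in the first step.
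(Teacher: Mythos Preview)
Your proposal is correct and follows essentially the same route as the paper. The paper disposes of the first assertion by citing \cite{bicogro}, Theorem 6.3, while you sketch the argument (which is what that reference contains); for the second assertion the paper simply declares that the displayed map \emph{is} the isomorphism produced by uniqueness of adjoints in Theorem~\ref{equivyetter} and then writes down the explicit inverse, whereas you insert the intermediate verifications (landing in the cotensor product, module and comodule linearity) before invoking the same adjoint argument. One small point: your appeal to Theorem~\ref{equivyetter} establishes that \emph{some} natural isomorphism exists, not that the displayed formula is it, so you do need either to identify your $\phi$ with the abstract adjoint-uniqueness map or to carry out the explicit inverse construction you describe; the paper does the latter, giving a concrete formula via an auxiliary map $\xi_V$ followed by the inverse of $\Delta_{Y,Y}^X$.
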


\begin{proof}
 The fact that this indeed defines an equivalence of tensor categories is proved in \cite{bicogro}, Theorem 6.3.
The announced natural isomorphism is the one induced by the uniqueness of adjoint functors as in the proof of Theorem \ref{equivyetter}. For the reader's convenience, let us write down explicitely the inverse isomorphism.
Let 
\begin{align*}
 \xi_V : & \left(V \boxtimes \coc(X,X)\right)\square_{\coc(X,X)} \coc(X,Y)  \longrightarrow \\
& \left(\left(V \square_{\coc(X,X)}\coc(X,Y)\right) \boxtimes \coc(Y,Y)\right) \square_{\mathcal C(Y,Y)} \left(\mathcal C(Y,X)\square_{\coc(X,X)}\coc(X,Y)\right) \\
v & \otimes a^{XX} \otimes b^{XY}  \longmapsto
v_{(0)} \otimes v_{(1)}^{XY} \otimes a_{(2)}^{YY} \otimes S_{XY}(a_{(1)}^{XY})v_{(2)}^{YX}a_{(3)}^{YX} \otimes b^{XY} 
\end{align*}
The explicit inverse of the morphism in the statement is then $({\rm id} \otimes (\varepsilon_Y  f))\xi_V$, where $f$ is the inverse isomorphism to $\Delta_{YY}^X : \coc(Y,Y) \rightarrow \coc(Y,X) \square_{\coc(X,X)} \coc(X,Y)$, see the proof of Lemma 2.14 in \cite{bicogro}.
\end{proof}

We end the section by recalling that $\B(E)$ is  part of a cogroupoid.
Let $E \in \GL_m(\C)$ and let $F \in \GL_n(\C)$.
Recall \cite{bi1} that the algebra $\mathcal B(E,F)$ is the universal algebra with generators
$u_{ij}$, $1\leq i \leq m,1\leq j \leq n$, 
satisfying the relations
$$F^{-\!1} u^t E u = I_n \ ; \   u F^{-\!1}  u^t E = I_m.$$
Of course the generator $u_{ij}$ in $\B(E,F)$ 
is denoted $u_{ij}^{EF}$ to express the dependence on $E$ and $F$, when needed.
 It is clear that $\B(E,E)=\B(E)$. 

We get a cogroupoid $\mathcal B$ whose objects are the invertible matrices $E \in \GL_n(\C)$, where the algebras 
$\B(E,F)$ are the ones just defined and where the structural morphisms are the algebra maps defined as follows
\begin{align*}
\Delta_{E,F}^G : \B(E,F) &\longrightarrow \B(E,G) \otimes \B(G,F)\\
u_{ij}^{EF} &\longmapsto \sum_k u_{ik}^{EG} \otimes u_{kj}^{GF}
\end{align*} 
\begin{align*}
S_{E,F} :\B(E,F) & \longrightarrow \B(F,E)^{\rm op} \\
u & \longmapsto E^{-1}u^tF
\end{align*}
and where $\varepsilon_{E}$ is the counit of $ \B(E)$.

When $\B(E,F)\not=0$ (i.e. when ${\rm tr}(E^{-1}E^t)={\rm tr}(F^{-1}F^t)$ and the matrices $E$, $F$ have size $\geq 2$, see \cite{bi1}), we know, by Theorem \ref{equivcomod} and Theorem \ref{expliyetter}, that the cotensor product by $\B(E,F)$ induces equivalences of tensor categories
$$\mathcal M^{\B(E)} \simeq^\otimes \mathcal M^{\B(F)}, \quad \yd^{\B(E)}_{\B(E)} \simeq^\otimes \yd^{\B(F)}_{\B(F)}$$ 
The  following $\B(F)$-comodule isomorphisms will be used in the next section.
\begin{align*}
 V_F & \longrightarrow V_E \square_{\B(E)} \B(E,F), \ & V_F^* \otimes V_F &\longrightarrow (V_E^*\otimes V_E)\square_{\B(E)}\B(E,F) \\
e_i^F & \longmapsto \sum_k e_k\otimes u_{ki}^{EF}, \ 
& e_i^{F*} \otimes e_j^F & \longmapsto \sum_{k,l} e_k^{E*} \otimes e_l^E \otimes S_{FE}(u_{ik}^{FE})u_{lj}^{EF}
\end{align*}

\section{A resolution of the counit for $\mathcal B(E)$}

In this section we write down the announced resolution $\star_E$ for the counit of $\B(E)$.

\subsection{The resolution.}
\begin{theorem}\label{reso}
 Let $E \in \GL_n(\C)$, $n \geq 2$, and let $V_E$ be the fundamental $n$-dimensional $\mathcal B(E)$-comodule.
 There exists an exact sequence of Yetter-Drinfeld modules over $\B(E)$
 $$0 \to \C\boxtimes \mathcal B(E) \overset{\phi_1}\longrightarrow
  (V_E^* \otimes V_E) \boxtimes \B(E) \overset{\phi_2}\longrightarrow
 (V_E^* \otimes V_E) \boxtimes \B(E) \overset{\phi_3} \longrightarrow \C \boxtimes \B(E) \overset{\varepsilon} \longrightarrow \C \to 0$$
 which thus yields a  free Yetter-Drinfeld resolution of the counit of $\B(E)$.
\end{theorem}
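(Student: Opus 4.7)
The plan proceeds in three stages. First, I would make the differentials $\phi_1,\phi_2,\phi_3$ explicit. The building blocks are the $\B(E)$-colinear maps $\delta:\C\to V_E\otimes V_E$ and $\varphi:V_E\to V_E^*$ of Section 2.3, together with the $\B(E)$-colinear maps $\Phi^1_{V_E}$ and $\Phi^2_{V_E}$ supplied by Lemma \ref{Phi}. Composing these with standard evaluation maps produces $\B(E)$-comodule morphisms out of $\C$, $V_E^*\otimes V_E$ and $V_E^*\otimes V_E^*\otimes V_E\otimes V_E$, whose YD-lifts under the adjunction of Proposition \ref{adjointfunctor} are exactly the desired $\phi_i$. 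Checking $\phi_i\phi_{i+1}=0$ and $\varepsilon\phi_3=0$ then reduces, via the same adjunction, to identities among $\B(E)$-comodule maps on $V_E$ and its tensor powers, which unwind to the defining relations $E^{-1}u^tEu=I_n=uE^{-1}u^tE$ of $\B(E)$ and the coevaluation/evaluation compatibilities for $(V_E,\delta,\varphi)$. At the end of this stage we have, for every $E$, a complex $\star_E$ of free YD modules.

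Second, I would verify exactness of $\star_{E_q}$ directly for the one-parameter family $E=E_q$. Here $\B(E_q)=\mathcal O(\SL_q(2))$ admits a well-known PBW basis, and $\star_{E_q}$ is (up to relabelling) the classical Koszul-type resolution of the counit of $\mathcal O(\SL_q(2))$, compare \cite{hk}. Exactness is then a short, less-than-one-page computation.

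Third, for arbitrary $E\in\GL_n(\C)$ with $n\geq 2$, I would pick $q\in\C^*$ with $\mathrm{tr}(E^{-1}E^t)=-q-q^{-1}$; by \cite{bi1} we have $\B(E,E_q)\neq 0$, so the cogroupoid $\mathcal B$ is connected at the objects $E_q$ and $E$. Theorem \ref{expliyetter} then yields an equivalence of tensor categories $\widehat\Theta:\yd^{\B(E_q)}_{\B(E_q)}\simeq^{\otimes}\yd^{\B(E)}_{\B(E)}$ which sends free YD modules to free YD modules and comes with natural isomorphisms $\widehat\Theta(V\boxtimes\B(E_q))\simeq\Theta(V)\boxtimes\B(E)$. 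Applying $\widehat\Theta$ to the exact sequence $\star_{E_q}$ thus produces an exact sequence of free YD modules over $\B(E)$, whose terms, via the $\B(E)$-comodule isomorphisms $V_{E_q}\simeq V_E\square_{\B(E)}\B(E,E_q)$ and $V_{E_q}^*\otimes V_{E_q}\simeq(V_E^*\otimes V_E)\square_{\B(E)}\B(E,E_q)$ of Section 4, match term by term those of $\star_E$. The exactness then transfers to $\star_E$.

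The main obstacle is to reconcile the transported differentials with the explicit $\phi_i$ of stage one. Since $\widehat\Theta$ is defined intrinsically by cotensor products, the maps it yields from $\star_{E_q}$ are not, a priori, expressed by the formulas in $u_{ij}^E$ one uses to define $\phi_i$. I would handle this either by a uniqueness argument, computing $\Hom_{\yd^{\B(E)}_{\B(E)}}((V_E^*\otimes V_E)\boxtimes\B(E),\C\boxtimes\B(E))$ and the analogous Hom spaces via the adjunction of Proposition \ref{adjointfunctor} to show that morphisms built from the canonical comodule maps are essentially unique in their source-and-target class; or, more conceptually, by verifying that the canonical morphisms of Lemma \ref{Phi} and Section 2.3 are themselves natural with respect to cotensor product with $\B(E_q,E)$, so that their YD-lifts transport to the corresponding morphisms for $\B(E)$, namely the $\phi_i$. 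Either way, exactness of $\star_E$ and the free-YD-resolution property then follow.
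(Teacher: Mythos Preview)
Your proposal is correct and follows essentially the same three-stage structure as the paper's proof. The paper resolves the obstacle you identify via your second suggested route: rather than a uniqueness argument on Hom spaces, it verifies directly (Lemma \ref{compar}) that under the explicit isomorphisms of Theorem \ref{expliyetter} and the end of Section 4, the transported differentials $\phi_i^{E_q}\otimes\mathrm{id}$ coincide with the explicit maps $\phi_i^E$, by checking commutativity of the relevant squares.
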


Of course the first thing to do is to define the maps $\phi_1$, $\phi_2$, $\phi_3$ in Theorem
\ref{reso}.

\begin{definition}
 Let $e_1^E, \ldots, e_n^E$ be the canonical basis of $V_E$.
 The linear maps   $\phi_1$, $\phi_2$, $\phi_3$ in Theorem \ref{reso}
 are defined as follows.
 \begin{align*}
  \phi_1 : \C\boxtimes \mathcal B(E) &\longrightarrow(V_E^*\otimes V_E)\boxtimes \mathcal B(E) \\
  x &\longmapsto \sum_{i,j} e_i^{E*}\otimes e^E_j \otimes
  \left( (E^tE^{-1})_{ij} - (Eu(E^{t})^{-1})_{ij}\right)x
 \end{align*}
 \begin{align*}
\phi_2 : (V_E^*\otimes V_E)\boxtimes \mathcal B(E) &\longrightarrow (V_E^*\otimes V_E)\boxtimes \mathcal B(E) \\
 e_i^{E*} \otimes e_j^E \otimes x &\longmapsto e_i^{E*} \otimes e_j^E \otimes x + 
 \sum_{k,l}e_k^{E*} \otimes e_l^E \otimes (u(E^t)^{-1})_{il}E_{jk}x
 \end{align*}
 \begin{align*}
  \phi_3 : (V_E^*\otimes V_E)\boxtimes \mathcal B(E) &\longrightarrow \mathcal \C \boxtimes B(E) \\
  e_i^{E*} \otimes e_j^{E} \otimes x &\longmapsto (u_{ij}-\delta_{ij})x
 \end{align*}
\end{definition}

When $E=I_n$, the maps $\phi_1$, $\phi_2$, $\phi_3$ are those defined in \cite{cht}.

\subsection{Proof of Theorem \ref{reso}}
\begin{lemma}
 The maps  $\phi_1$, $\phi_2$, $\phi_3$ in Theorem \ref{reso} are morphisms of Yetter-Drinfeld modules.
\end{lemma}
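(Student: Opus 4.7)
The plan is to verify the two conditions defining a Yetter-Drinfeld morphism---$A$-linearity and $A$-colinearity---separately. First, $A$-linearity of each $\phi_i$ is immediate from the formulas: in every case the variable $x \in \B(E)$ appears only on the rightmost tensor factor via right multiplication, and this commutes with the free right module action on the target. The substantive task is therefore to check $A$-colinearity.

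For this I would invoke the adjunction of Proposition \ref{adjointfunctor}. Since $V \boxtimes \B(E)$ is generated as a right $\B(E)$-module by the base $V \otimes 1$, and the $\phi_i$ are already $A$-linear, $\phi_i$ will be a morphism of Yetter-Drinfeld modules precisely when its restriction to the base, viewed as a linear map from $V$ into the underlying $\B(E)$-comodule of the target, is colinear. The problem thus reduces to three comodule-map verifications between finite-dimensional comodules.

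My strategy for each is to factor the restriction through the known colinear maps: $\Phi_{V_E}^1$ and $\Phi_{V_E}^2$ from Lemma \ref{Phi}, together with the morphisms $\delta : \C \to V_E \otimes V_E$ and $\varphi : V_E \to V_E^*$ from Section 2.3, and the standard evaluation $V_E^* \otimes V_E \to \C$ (all colinear). For $\phi_3$ the restriction is $\Phi_{V_E}^1$ minus the evaluation composed with the unit $\C \hookrightarrow \B(E)$. For $\phi_2$ the restriction decomposes as the identity plus the composite $\Phi_{V_E}^2 \circ (\id_{V_E^*} \otimes \varphi \otimes \delta)$. For $\phi_1$, the scalar term factors as $(\varphi \otimes \id_{V_E}) \circ \delta$ (composed with the unit $\C \hookrightarrow \B(E)$), and the nontrivial term involving $E u (E^t)^{-1}$ factors similarly through $\Phi_{V_E}^1$ after applying $\delta$ and $\varphi$ on appropriate tensor positions.

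The only real obstacle will be matrix bookkeeping: one must correctly match each occurrence of $E$, $E^{-1}$, $E^t$, or $(E^t)^{-1}$ in the formulas for $\phi_i$ against the matrix arising from $\delta$, $\varphi$, or $\Phi_{V_E}^j$, and use the defining relation $E^{-1}u^tEu = I_n$ where the antipode implicitly intervenes. Once the right factorization is written down, each step is a composite of colinear maps, and the Yetter-Drinfeld morphism property follows immediately.
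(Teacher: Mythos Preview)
Your approach is essentially the paper's own: it too reduces to checking colinearity of the restriction to the base via the adjunction of Proposition~\ref{adjointfunctor} (denoted $\tilde{\ }$ there) and factors each restriction through exactly the building blocks you list --- $\delta$, $\varphi$, $\Phi_{V_E}^1$, $\Phi_{V_E}^2$, and evaluation. One minor correction: the nontrivial term of $\phi_1$ lands in $(V_E^*\otimes V_E)\boxtimes\B(E)$ and so must go through $\Phi_{V_E}^2$, not $\Phi_{V_E}^1$ (the paper uses $\Phi_{V_E}^2\circ(\varphi\otimes\varphi\otimes\id)\circ(\delta\otimes\delta)$), and the defining relation of $\B(E)$ is never needed --- the factorizations are purely formal once $\delta$ and $\varphi$ are known to be colinear.
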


\begin{proof}
 This can be checked directly, but for future use we describe $\phi_1$, $\phi_2$, $\phi_3$
 as linear combination of maps that are known to be morphisms of Yetter-Drinfeld modules.
 Let $\phi'_1$, $\phi_1''$ be defined by the following compositions of $\mathcal B(E)$-colinear maps
 $$\phi_1' : \C  \overset{\delta}  \rightarrow V_E \otimes V_E\overset{{\varphi\otimes {\rm id}}} \longrightarrow V_E^* \otimes V_E \overset{{\rm id} \otimes u}
 \longrightarrow (V_E^* \otimes V_E) \boxtimes \mathcal B(E)$$
 $$\phi_1'' : \C  \overset{\delta\otimes \delta}  \longrightarrow V_E \otimes V_E\otimes V_E \otimes V_E \overset{{\varphi\otimes \varphi \otimes {\rm id}}} \longrightarrow V_E^* \otimes V_E^* \otimes V_E\otimes V_E\overset{\Phi^2_V}\longrightarrow (V_E^* \otimes V_E) \boxtimes \mathcal B(E) $$
where $u$ is the unit map $k \subset \mathcal B(E)$, the maps $\varphi$, $\delta$ were defined in Section 2 and $\Phi_V^2$ was defined in Lemma \ref{Phi}. We have $\phi_1 = \tilde{\phi_1'} - \tilde{\phi_1''}$ (where the notation $\tilde{}$ has the same meaning as in the proof of Proposition \ref{adjointfunctor}) 
hence $\phi_1$ is a Yetter-Drinfeld map. Define now $\phi_2'$ by the composition of the following colinear maps
$$\phi_2' :  V_E^* \otimes V_E  \overset{{\rm id}\otimes \delta}  \longrightarrow 
V_E^* \otimes V_E\otimes V_E \otimes V_E \overset{{\rm id}\otimes \varphi \otimes {\rm id}} \longrightarrow V_E^* \otimes V_E^* \otimes V_E\otimes V_E\overset{\Phi^2_V}\longrightarrow (V_E^* \otimes V_E) \boxtimes \mathcal B(E) $$
We have $\phi_2 = {\rm id} + \tilde{\phi_2'}$, hence 
$\phi_2$ is a Yetter-Drinfeld map. Finally we have
$\phi_3 = \tilde{\Phi^1_V} - \tilde{\phi_3'}$, where $\phi_3'$ is the evaluation map 
$V^*\otimes V \rightarrow k \subset \mathcal B(E)$, and hence $\phi_3$ is also a morphism of 
Yetter-Drinfeld modules. 
\end{proof}

\begin{lemma}
 The sequence in Theorem \ref{reso} is a complex.
\end{lemma}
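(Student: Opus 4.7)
The plan is to verify the three vanishing identities $\varepsilon\circ\phi_3=0$, $\phi_3\circ\phi_2=0$ and $\phi_2\circ\phi_1=0$ by direct computation on the generators of the free modules, reducing each time to a matrix identity in $M_n(\B(E))$ that follows from the defining relations $E^{-1}u^tEu=I_n=uE^{-1}u^tE$. It is convenient to rewrite these two relations in the equivalent forms $u^tEu=E$ and $uE^{-1}u^t=E^{-1}$, which will do all the work.

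The identity $\varepsilon\circ\phi_3=0$ is immediate, since $\varepsilon(u_{ij}-\delta_{ij})=0$. For $\phi_3\circ\phi_2$ I would expand
$$\phi_3\phi_2(e_i^{E*}\otimes e_j^E\otimes x)=(u_{ij}-\delta_{ij})x+\sum_{k,l}(u_{kl}-\delta_{kl})(u(E^t)^{-1})_{il}E_{jk}x.$$
The $-\delta_{kl}$ part of the second term telescopes to $-\sum_k(u(E^t)^{-1})_{ik}(E^t)_{kj}x=-u_{ij}x$ and cancels the leading $u_{ij}x$. The remaining sum, after pulling out all scalar factors and grouping, equals the matrix entry $(EuE^{-1}u^t)_{ji}x$; the relation $uE^{-1}u^t=E^{-1}$ then gives $EuE^{-1}u^t=I_n$, so this is $\delta_{ij}x$ and kills $-\delta_{ij}x$.

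The verification of $\phi_2\circ\phi_1=0$ follows the same pattern. Extracting the coefficient of $e_a^{E*}\otimes e_b^E$ in $\phi_2\phi_1(x)$ splits it into four pieces; two of them cancel via the simple scalar identity $\sum_jE_{ja}(E^tE^{-1})_{ij}=E_{ai}$. The main task is to handle the remaining quadratic piece $-\sum_{i,j}(u(E^t)^{-1})_{ib}E_{ja}(Eu(E^t)^{-1})_{ij}x$: expanding each matrix product into generators and isolating the scalars produces an internal expression $\sum_{s,q}u_{sp}E_{sq}u_{qr}=(u^tEu)_{pr}$, which equals $E_{pr}$ by the first of our two reformulated relations; the surviving scalar contraction $\sum_{p,r}(E^{-1})_{bp}E_{pr}(E^tE^{-1})_{ar}$ then collapses via $E^{-1}E=I_n$ to $(E^tE^{-1})_{ab}$, cancelling the last remaining piece. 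The only potential source of error — certainly not a real obstacle — is index bookkeeping: the entries $u_{ij}$ do not commute, so one must apply the $\B(E)$ relations to the exact unordered quadratic monomials in the $u_{ij}$ that actually appear, and resist the temptation to reorder factors that do not commute past each other.
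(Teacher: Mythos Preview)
Your argument is correct: each of the three vanishings reduces, after careful index chasing, to one of the defining matrix identities $u^tEu=E$ or $uE^{-1}u^t=E^{-1}$, and your handling of the noncommutativity of the $u_{ij}$ is sound (in each case the quadratic monomial appears in the right order for the relation to apply directly).

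The paper's proof of $\phi_2\circ\phi_1=0$ is organized differently. In the preceding lemma the author decomposes $\phi_1=\tilde{\phi_1'}-\tilde{\phi_1''}$ and $\phi_2=\mathrm{id}+\tilde{\phi_2'}$, where $\phi_1'$, $\phi_1''$, $\phi_2'$ are built from the colinear maps $\delta$, $\varphi$ and $\Phi_V^2$. The identity $\phi_2\circ\phi_1=0$ then follows from the two observations $\tilde{\phi_2'}\circ\tilde{\phi_1'}=\tilde{\phi_1''}$ and $\tilde{\phi_2'}\circ\tilde{\phi_1''}=\tilde{\phi_1'}$, which are immediate once one writes out the compositions of the underlying comodule maps. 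This is a more structural packaging of the same computation: it hides the indices inside the categorical maps and makes the four-term cancellation transparent, at the cost of having to set up the decomposition first. Your direct matrix approach is more hands-on and self-contained; the paper's approach better explains \emph{why} the cancellation occurs and dovetails with the Yetter--Drinfeld viewpoint used elsewhere.
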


\begin{proof}
 It is straighforword to check that $\varepsilon \circ \phi_3=0$ and that $\phi_3 \circ \phi_2=0$.
The identity $\phi_2 \circ \phi_1=0$ follows from the observation that
$\tilde{\phi_2'} \circ \tilde{\phi_1'} = \tilde{\phi_1''}$ and
$\tilde{\phi_2'} \circ \tilde{\phi_1''} = \tilde{\phi_1'}$.
\end{proof}

\begin{lemma}\label{compar}
 Let $E \in \GL_n(\C)$, $F \in \GL_m(\C)$ with $m,n \geq 2$ and ${\rm tr}(E^{-1}E^t)={\rm tr}(F^{-1}F^t)$.
Then the sequence in Theorem \ref{reso} is exact for $\mathcal B(E)$ if and only if it is exact for $\mathcal B(F)$.
\end{lemma}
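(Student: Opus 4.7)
The plan is to transport the sequence $\star_E$ to the sequence $\star_F$ via the tensor equivalence between categories of Yetter--Drinfeld modules furnished by the cogroupoid $\mathcal B$, and then invoke the fact that equivalences of abelian categories preserve exactness. Under the hypothesis ${\rm tr}(E^{-1}E^t)={\rm tr}(F^{-1}F^t)$ with $m,n\geq 2$, we have $\B(E,F)\neq 0$ by \cite{bi1}, so the full sub-cogroupoid of $\mathcal B$ on the objects $\{E,F\}$ is connected. Theorem \ref{equivcomod} and Theorem \ref{expliyetter} then provide linear tensor equivalences
$$\Theta=-\square_{\B(E)}\B(E,F):\mathcal M^{\B(E)}\simeq^\otimes\mathcal M^{\B(F)},\qquad \widehat\Theta:\yd_{\B(E)}^{\B(E)}\simeq^\otimes\yd_{\B(F)}^{\B(F)},$$
together with a natural isomorphism $\widehat\Theta(V\boxtimes\B(E))\simeq \Theta(V)\boxtimes\B(F)$.

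Next I would use the specific $\B(F)$-comodule isomorphisms $\Theta(\C)\simeq\C$ (since $\C$ is the monoidal unit), $\Theta(V_E)\simeq V_F$ and $\Theta(V_E^*\otimes V_E)\simeq V_F^*\otimes V_F$ listed at the end of Section 4, to identify $\widehat\Theta$ applied to each term of $\star_E$ with the corresponding term of $\star_F$. Then the substantive step is to check that under these identifications, $\widehat\Theta(\phi_i^E)$ corresponds to $\phi_i^F$ for $i=1,2,3$. Rather than unwind the explicit formulas for $\phi_i^E$, I would appeal to the alternative description given in the first lemma of Subsection 5.2: each $\phi_i^E$ is the image under the adjunction bijection of Proposition \ref{adjointfunctor} of a linear combination of $\B(E)$-colinear maps built from the duality data $\delta,\varphi$ on $V_E$, the evaluation $V_E^*\otimes V_E\to\C$, and the universal maps $\Phi_V^1,\Phi_V^2$ of Lemma \ref{Phi}. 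All these pieces are natural tensor-categorical constructions in $V_E$, so they are preserved by the tensor equivalence $\Theta$ (for $\delta,\varphi,\mathrm{ev}$, using that $V_F$ is a dual of itself with the same structure maps as $V_E$, via the isomorphisms of Section 4) and by $\widehat\Theta$ (for $\Phi_V^i$, using that these are determined by their universal property). This yields, up to the chosen identifications, $\widehat\Theta(\phi_i^E)=\phi_i^F$.

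Granting this identification, the sequence $\widehat\Theta(\star_E)$ is isomorphic to $\star_F$ in $\yd_{\B(F)}^{\B(F)}$, and hence also isomorphic to $\star_F$ as a sequence of right $\B(F)$-modules (since $\widehat\Theta$ is compatible with the forgetful functor up to $\Theta$). Because $\widehat\Theta$ is an equivalence of abelian categories, it preserves and reflects exactness, so $\star_E$ is exact in $\yd_{\B(E)}^{\B(E)}$ if and only if $\star_F$ is exact in $\yd_{\B(F)}^{\B(F)}$; since exactness of these sequences as sequences of right modules coincides with exactness in the Yetter--Drinfeld category (the forgetful functor is exact and faithful), the lemma follows.

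The main obstacle is the verification that the concrete maps $\phi_i^F$ really arise as $\widehat\Theta(\phi_i^E)$ under the chosen comodule isomorphisms. This is a bookkeeping issue: it amounts to tracking the explicit formulas of Theorem \ref{expliyetter} through the adjunction isomorphism of Proposition \ref{adjointfunctor} applied to $\delta,\varphi,\mathrm{ev},\Phi_V^1,\Phi_V^2$ for $V_E$ and $V_F$. No substantive computation is needed beyond naturality of these maps and the compatibility $\Theta(V_E)\simeq V_F$, but one has to be careful to use the same isomorphisms $V_F\simeq V_E\square_{\B(E)}\B(E,F)$ and $V_F^*\otimes V_F\simeq (V_E^*\otimes V_E)\square_{\B(E)}\B(E,F)$ throughout so that the three maps $\phi_i^E$ transport coherently to the three $\phi_i^F$.
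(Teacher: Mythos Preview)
Your approach is essentially the paper's: both transport $\star_E$ to $\star_F$ via the equivalence $\widehat\Theta$ of Theorem~\ref{expliyetter} and then use that equivalences of abelian categories preserve and reflect exactness. The only difference is organizational: the paper verifies $\widehat\Theta(\phi_i^E)\simeq\phi_i^F$ by writing down the explicit vertical isomorphisms (the composites of the maps at the end of Section~4 with those of Theorem~\ref{expliyetter}) and checking directly that the resulting squares commute, whereas you propose to factor this through the decomposition of the $\phi_i$ into the building blocks $\delta,\varphi,\mathrm{ev},\Phi_V^1,\Phi_V^2$ and argue each block is preserved.

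One caution about your shortcut: the claim that $\Theta$ carries $\varphi^E,\delta^E$ to $\varphi^F,\delta^F$ is \emph{not} a formal consequence of $\Theta$ being a tensor equivalence. A tensor functor sends duality data to duality data, but $\varphi^E:V_E\to V_E^*$ is one \emph{particular} self-duality isomorphism among a line of them, and its image under $\Theta$ (composed with the chosen identification $\Theta(V_E)\simeq V_F$) is a priori only \emph{some} isomorphism $V_F\to V_F^*$, not automatically $\varphi^F$. Since $\phi_1'$ is linear in $\varphi$ while $\phi_1''$ is quadratic, a scalar discrepancy here would genuinely change $\phi_1$. That the transported map really is $\varphi^F$ follows from the defining relation $F^{-1}u^tEu=I$ in $\B(E,F)$ together with the explicit isomorphism $e_i^F\mapsto\sum_k e_k^E\otimes u_{ki}^{EF}$; this is a short computation, but it \emph{is} a computation, and it is precisely what the paper's ``direct verification'' of the commutative squares amounts to. So your ``no substantive computation is needed beyond naturality'' is slightly optimistic, though you are right that the remaining work is routine once one commits to the explicit identifications.
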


\begin{proof}
 We know from \cite{bi1} and Theorems \ref{equivyetter} and \ref{expliyetter}
that we have an equivalence of tensor categories $\yd_{\mathcal B(E)}^{\mathcal B(E)} \simeq^\otimes  \yd_{\mathcal B(F)}^{\mathcal B(F)}$ that preserves freeness of Yetter-Drinfeld modules. Let us check that this tensor equivalence transforms the complex of Yetter-Drinfeld modules
of Theorem \ref{reso} for $\mathcal B(E)$ into the complex of Yetter-Drinfeld modules of Theorem \ref{reso} for $\mathcal B(F)$.
First consider the following diagram.
$$\xymatrix{
\C \boxtimes \mathcal B(F)  \ar[r]^{\phi_1^F} \ar[d] & (V_F^* \otimes V_F) \boxtimes \B(F) \ar[d]  \\
\left(\C \square_{\mathcal B(E)} \mathcal B(E,F)\right)\boxtimes \mathcal B(F) \ar[d] &  
\left((V_E^* \otimes V_E) \square_{\mathcal B(E)} \mathcal B(E,F)\right)\boxtimes \B(F) \ar[d]\\
\left(\C \boxtimes \mathcal B(E)\right) \square_{\mathcal B(E)}\mathcal B(E,F)\ar[r]^(.45){\phi_1^E\otimes {\rm id}} &  
\left((V_E^* \otimes V_E) \boxtimes \mathcal B(E)\right) \square_{\mathcal B(E)} \mathcal B(E,F)
}$$

The first vertical arrow on the left is given by the identification $\C \simeq \C\square_{\B(E)} \B(E,F)$
and the second one is that of Theorem \ref{expliyetter} for the trivial comodule $\C$.
 So the composition of the vertical arrows on the left is
\begin{align*}
\C \boxtimes \mathcal B(F) & \longrightarrow
\left(\C \boxtimes \mathcal B(E)\right) \square_{\mathcal B(E)}\mathcal B(E,F) \\
 1 \otimes x^{FF}&\longmapsto x_{(2)}^{EE} \otimes S_{FE}(x_{(1)}^{FE})x_{(3)}^{EF}
 \end{align*}
The first vertical arrow on the right is the one given at the end of Section 4, while the second one is that of Theorem \ref{expliyetter} for the comodule $V_E^* \otimes V_E$.
So the composition of the vertical arrows on the right is
\begin{align*}
(V_F^* \otimes V_F) \boxtimes \B(F) & \longrightarrow
\left((V_E^* \otimes V_E) \boxtimes \B(E) \right)\square_{\mathcal B(E)} \mathcal B(E,F)\\
 e_i^{F*} \otimes e_j^F \otimes x^{FF} & \longmapsto
\sum_{k,l} e_k^{E*} \otimes e_l^E \otimes x_{(2)}^{EE} \otimes S_{FE}(x_{(1)}^{FE})
S_{FE}(u_{ik}^{FE})u_{lj}^{EF}x_{(3)}^{EF}
\end{align*}
The vertical arrows are compositions of isomorphisms so are isomorphims.
It is a direct verification to check that the previous diagram is commutative.
Similarly one checks the commutativity of the following diagrams.

$$\xymatrix{
(V_F^* \otimes V_F) \boxtimes \B(F)  \ar[r]^{\phi_2^F} \ar[d] &  \ar[d] (V_F^* \otimes V_F) \boxtimes \B(F) \\
\left((V_E^* \otimes V_E) \square_{\mathcal B(E)} \mathcal B(E,F)\right)\boxtimes \B(F) \ar[d] &  
\left((V_E^* \otimes V_E) \square_{\mathcal B(E)} \mathcal B(E,F)\right)\boxtimes \B(F) \ar[d]\\
\left((V_E^* \otimes V_E) \boxtimes \mathcal B(E)\right) \square_{\mathcal B(E)} \mathcal B(E,F)\ar[r]^{\phi_2^E\otimes {\rm id}} &  
\left((V_E^* \otimes V_E) \boxtimes \mathcal B(E)\right) \square_{\mathcal B(E)} \mathcal B(E,F)
}$$

$$\xymatrix{
(V_F^* \otimes V_F) \boxtimes \B(F)  \ar[r]^{\phi_3^F} \ar[d] &  \ar[d] \C \boxtimes \B(F) \\
\left((V_E^* \otimes V_E) \square_{\mathcal B(E)} \mathcal B(E,F)\right)\boxtimes \B(F) \ar[d] &  
\left(\C \square_{\mathcal B(E)} \mathcal B(E,F)\right)\boxtimes \B(F) \ar[d]\\
\left((V_E^* \otimes V_E) \boxtimes \mathcal B(E)\right) \square_{\mathcal B(E)} \mathcal B(E,F)\ar[r]^(.55){\phi_3^E\otimes {\rm id}} &  
\left(\C \boxtimes \mathcal B(E)\right) \square_{\mathcal B(E)} \mathcal B(E,F)
}$$

$$\xymatrix{
\C \boxtimes \mathcal B(F)  \ar[r]^{\varepsilon} \ar[d] & \C \ar[dd]  \\
\left(\C \square_{\mathcal B(E)} \mathcal B(E,F)\right)\boxtimes \mathcal B(F) \ar[d] &  \\
\left(\C \boxtimes \mathcal B(E)\right) \square_{\mathcal B(E)}\mathcal B(E,F)\ar[r]^(.6){\varepsilon\otimes {\rm id}} &  \C \square_{\mathcal B(E)} \mathcal B(E,F)
}$$
The vertical isomorphims are those previously defined.
Thus we conclude that the complex in Theorem \ref{reso} is exact for $\mathcal B(E)$ if and only if it is exact for $\mathcal B(F)$.
\end{proof}

\begin{lemma}\label{eq}
 Let $q \in \C^*$. The sequence of Theorem \ref{reso} is exact when $E = E_q$.
\end{lemma}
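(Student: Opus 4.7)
My plan is to do a direct verification for the special case $E=E_q$, taking advantage of the fact that $\B(E_q) = \mathcal O(\SL_q(2))$ admits a standard PBW basis. First I would unpack the three maps explicitly: with $E_q^{-1}E_q^t = \mathrm{diag}(-q,-q^{-1})$ and $u = \begin{pmatrix} a & b \\ c & d \end{pmatrix}$, a short computation gives
\begin{align*}
\phi_1(1) &= (qd - q^{-1})\,e_1^*\otimes e_1 - c\, e_1^*\otimes e_2 - b\, e_2^*\otimes e_1 + (q^{-1}a - q)\,e_2^*\otimes e_2, \\
\phi_3(e_i^*\otimes e_j\otimes 1) &= u_{ij} - \delta_{ij},
\end{align*}
and $\phi_2$ is similarly expressed as right multiplication by an explicit $4\times 4$ matrix with entries in $\mathcal O(\SL_q(2))$, coming from $u(E_q^t)^{-1}$ and $E_q$. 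Forgetting the Yetter-Drinfeld structure (which is irrelevant for exactness at the level of right modules), the sequence becomes a length-three complex of finitely generated free right $\mathcal O(\SL_q(2))$-modules of ranks $1,4,4,1$.

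Next I would check exactness using the PBW basis $\{a^i b^j c^k,\ d^\ell b^j c^k : i,j,k\geq 0,\ \ell\geq 1\}$ of $\mathcal O(\SL_q(2))$ together with the quantum determinant relation $ad - q^{-1}bc = 1$. Exactness at the right end is immediate: $\phi_3$ sends the four generators to $a-1,b,c,d-1$, which generate the augmentation ideal $\ker\varepsilon$, giving surjectivity onto $\ker\varepsilon$ and $\varepsilon\circ\phi_3 = 0$. Injectivity of $\phi_1$ reduces to showing that the tuple $(qd - q^{-1},\, -c,\, -b,\, q^{-1}a - q)$ has no nonzero right annihilator in $\mathcal O(\SL_q(2))$, which is clear from the PBW basis (e.g.\ looking at the coefficient in $d$). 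The identities $\phi_2\circ\phi_1 = 0$ and $\phi_3\circ\phi_2 = 0$ are straightforward consequences of the defining relations $E_q^{-1}u^t E_q u = I_2 = u E_q^{-1}u^t E_q$, as already recorded in the "it is a complex" lemma.

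The actual work is exactness in the two middle spots. The cleanest way is to recognise the complex as the well-known AS-regular/Koszul-type resolution of the counit of $\mathcal O(\SL_q(2))$ of global dimension $3$; such a resolution is written down for instance in \cite{hk}, and has precisely the shape $0\to R\to R^4\to R^4\to R\to \C\to 0$ where the two middle maps are encoded by the quantum orthogonality relations and their companion syzygies. I would check, by matching matrices of generators, that our $(\phi_1,\phi_2,\phi_3)$ coincides with that resolution up to an invertible change of free basis; exactness then transfers. The main obstacle is this final bookkeeping step: one has to verify that the single syzygy among the four relations $u_{ij} - \delta_{ij}$ (encoded by $\phi_1$) is correctly recovered, and that the four middle syzygies (encoded by $\phi_2$) really exhaust $\ker\phi_3$. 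If one prefers a self-contained computation, the same check can be done by hand using the PBW basis: write an arbitrary element of $\ker\phi_3$ in the $4$-generator free module, reduce its components modulo the PBW basis, and exhibit it explicitly as $\phi_2$ of a canonical pre-image, and similarly for $\ker\phi_2 \subseteq \image\phi_1$. As the paper indicates, this occupies less than a page.
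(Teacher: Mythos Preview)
Your outline is sound and would work, but the paper's execution of the middle-exactness steps is different and worth noting. You propose either matching matrices with the Hadfield--Kr\"ahmer resolution from \cite{hk}, or a direct PBW-basis chase; the paper does neither. Instead it exploits the fact that $A=\mathcal O(\SL_q(2))$ and its quotients $A/(b)$, $A/(c)$, $A/(b,c)$ are integral domains, and reduces modulo $\Im(\phi_2)$ (respectively $\Im(\phi_1)$) component by component. For instance, to show $\Ker(\phi_3)\subseteq\Im(\phi_2)$: given $X=\sum e_i^*\otimes e_j\otimes x_{ij}$, one first subtracts a suitable $\phi_2$-image to kill $x_{11}$; the remaining relation $bx_{12}+cx_{21}+(d-1)x_{22}=0$ forces $x_{22}\in(b,c)$ since $A/(b,c)$ is a domain, so a further correction kills $x_{22}$; then $bx_{12}+cx_{21}=0$ in the domain $A/(c)$ forces $x_{12}\in(c)$, and the remainder is visibly in $\Im(\phi_2)$. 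The argument for $\Ker(\phi_2)=\Im(\phi_1)$ is similar, using $A/(b)$.

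This buys brevity and self-containment: no external resolution to match against, and no PBW normal-form bookkeeping---the entire verification is the half-page of reductions just described. Your route via \cite{hk} is conceptually fine but shifts the work into verifying that the change of basis really is invertible and intertwines the differentials; the PBW route is also fine but messier. The integral-domain-quotient trick is the idea that makes the paper's claim ``less than a one page computation'' literally true.
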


\begin{proof}
Put $A= \B(E_q)=\mathcal O(\SL_q(2))$. As usual we put $a=u_{11}$, $b=u_{12}$, $c=u_{21}$, $d=u_{22}$.
We will frequently use the well-known fact that $A$ and its quotients $A/(b)$, $A/(c)$ and $A/(b,c)$ are integral domains.
For $x \in A$, we have
 \begin{align*}
 \phi_1(x) = e_1^* \otimes e_1 \otimes &((-q^{-1} +qd)x )+
 e_1^* \otimes e_2 \otimes (-cx) + e_2^* \otimes e_1 \otimes (-bx) +
 e_2^* \otimes e_2 \otimes ((-q+q^{-1}a)x) \\
  \phi_2(e_1^* \otimes e_1 \otimes x) &= e_1^* \otimes e_1 \otimes x
  + e_2^* \otimes e_1 \otimes (-qbx) + e_2^* \otimes e_2 \otimes ax \\
\phi_2 (e_1^* \otimes e_2 \otimes x) &=
e_1^* \otimes e_1 \otimes bx + e_1^* \otimes e_2 \otimes (1-q^{-1}a)x \\
\phi_2(e_2^* \otimes e_1 \otimes x) &= e_2^* \otimes e_1 \otimes (1-qd)x
+e_2^* \otimes e_2 \otimes cx \\
\phi_2(e_2^* \otimes e_2 \otimes x) &= e_1^* \otimes e_1 \otimes dx + e_1^* \otimes e_2 \otimes (-q^{-1}cx)
+ e_2^* \otimes e_2 \otimes x \\
\phi_3(e_1^* \otimes e_1 \otimes x) &= (a-1)x, \quad \phi_3(e_1^* \otimes e_2 \otimes x) = bx, \\
\phi_3(e_2^* \otimes e_1 \otimes x) &=cx, \quad \phi_3(e_2^* \otimes e_2 \otimes x) =(d-1)x
 \end{align*}
The injectivity of $\phi_1$ follows from the fact that $A$ is an integral domain and the surjectivity of 
$\phi_3$ is easy (and well-known). Let $X = \sum_{i,j}e_i^* \otimes e_j \otimes x_{ij} \in \Ker(\phi_3)$.
We have
$$X + \phi_2(-e_1^* \otimes e_1 \otimes x_{11}) = e_1^* \otimes e_2\otimes x_{12} + e_2^*\otimes e_1
\otimes (qbx_{11} + x_{21}) + e_2^* \otimes e_2 \otimes (-ax_{11} +x_{22})$$ and
hence to show that $X \in {\rm Im}(\phi_2)$, we can assume that $x_{11}=0$. We have
$$bx_{12} + cx_{21} + (d-1)x_{22}=0$$
which gives $(d-1)x=0$ in the integral domain $A/(b,c)$ and thus $x_{22} = b\alpha +c\beta$ for some 
$\alpha, \beta \in A$. Then we have
$$X+ \phi_2 ( e_1^* \otimes e_2 \otimes qd\alpha - e_2^* \otimes e_1 \otimes \beta - e_2^* \otimes e_2 \otimes b\alpha) = e_1^* \otimes e_2 \otimes x + e_2^* \otimes e_1 \otimes y$$
for some $x,y \in A$, and hence we also can assume that $x_{22}=0$.  
Then we have $bx_{12}+cx_{21}=0$, which gives $bx_{12}=0$ in the integral domain 
$A/(c)$, and hence $x_{12}=c\alpha$ for some $\alpha \in A$, and moreover $x_{21}=-b\alpha$.
Then we have
$$\phi_2(q^{-1} e_1^* \otimes e_1 \otimes \alpha + e_1^* \otimes e_2 \otimes c\alpha 
-q^{-1} e_2^* \otimes e_2 \otimes a\alpha) = X$$
and we conclude that $\Ker(\phi_3) = {\rm Im}(\phi_2)$.

Let $X = \sum_{i,j}e_i^* \otimes e_j \otimes x_{ij} \in \Ker(\phi_2)$.
Then $-qbx_{11} +(1-qd)x_{21}=0$, hence $(1-qd)x_{21}=0$ in the integral domain
$A/(b)$ and hence $x_{21}= b\alpha$ for some $\alpha \in A$.
Hence 
$$X + \phi_1(\alpha) = e_1^* \otimes e_1 \otimes (x_{11} + (-q^{-1}+qd)\alpha)
+ e_1^* \otimes e_2 \otimes (x_{12}-c\alpha) + e_2^* \otimes e_2 \otimes (x_{22} + (-q+q^{-1}a)\alpha)$$
and we can assume that $x_{21}=0$. But then, using the fact that $A$ is an integral domain,
we see that $X=0$ since $X \in \Ker(\phi_2)$. We conclude that $\Ker(\phi_2)= {\rm Im}(\phi_1)$.
\end{proof}

We are now ready to prove Theorem \ref{reso}. Let $E \in \GL_n(\C)$, $n \geq 2$, and let $q \in \C^*$ be such that
${\rm tr}(E^{-1}E^t)=-q-q^{-1}={\rm tr}(E_q^{-1}E_q^t)$. Lemma \ref{compar} and Lemma \ref{eq} combined together yield that the sequence in Theorem \ref{reso} is exact. 

\section{Applications}


In this section we present several applications of the resolution built in the previous section.

\subsection{Smoothness and Poincar\'e duality}

\begin{theorem}
 Let $E \in \GL_n(\C)$ with $n \geq 2$. The algebra $\B(E)$ is smooth of dimension 3. In particular
 $H_n(\B(E), M)=(0)=H^n(\B(E),M)$ for any $\B(E)$-bimodule $M$ and any $n \geq 4$.
\end{theorem}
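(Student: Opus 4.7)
The strategy is to promote the free Yetter-Drinfeld resolution of $\C_\varepsilon$ from Theorem \ref{reso} to a resolution of $\B(E)$ of length $3$ by finitely generated projective $\B(E)$-bimodules, and then to verify that $3$ is the minimal such length.

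For the promotion step, I would use the standard Hopf algebraic construction that converts a resolution of $\C_\varepsilon$ by finitely generated free right $A$-modules into a resolution of $A$ by finitely generated free $A$-bimodules (with $A = \B(E)$). Concretely, for a right $A$-module $M$, endow $A \otimes M$ with the $A$-bimodule structure
\[ b \cdot (a \otimes m) = ba \otimes m, \qquad (a \otimes m) \cdot c = a c_{(1)} \otimes m \cdot c_{(2)}. \]
The resulting functor $M \mapsto A \otimes M$ is exact, sends $\C_\varepsilon$ to the regular bimodule $A$, and sends the free right $A$-module $V \boxtimes A$ to a bimodule isomorphic, via the twist $a \otimes v \otimes m \mapsto a S(m_{(1)}) \otimes v \otimes m_{(2)}$, to the free $A$-bimodule $A \otimes V \otimes A$ of rank $\dim V$. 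Applied termwise to the sequence of Theorem \ref{reso}, this yields a length-$3$ resolution of $\B(E)$ by finitely generated free bimodules, establishing the upper bound on the bimodule projective dimension and hence the vanishing $H_n(\B(E), M) = H^n(\B(E), M) = 0$ for every bimodule $M$ and every $n \geq 4$.

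For the matching lower bound, I need to produce a bimodule $M$ with $H^3(\B(E), M) \neq 0$. By Proposition \ref{ident} this reduces to finding a right $\B(E)$-module $N$ with $\Ext^3_{\B(E)}(\C_\varepsilon, N) \neq 0$, which in view of the resolution of Theorem \ref{reso} is the cokernel of the map $\phi_1^{*}: \Hom_A(P_2, N) \to \Hom_A(P_3, N) \simeq N$ induced by $\phi_1$ (writing $P_i$ for the $i$-th term of the resolution). The main obstacle I anticipate is the choice of $N$: the explicit form of $\phi_1$ shows that the naive choice $N = \C_\varepsilon$ does not always detect non-vanishing, since the defect matrix $E^t E^{-1} - E (E^t)^{-1}$ is generically non-zero and can make $\phi_1^{*}$ surjective. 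The natural remedy is to twist by the sovereign character $\Phi$ introduced in Section 2.3 (or equivalently to use $N = \B(E)_\sigma$ with $\sigma$ the modular automorphism of Section 2.3), for which the cokernel is forced to be non-zero; this non-vanishing is precisely what underlies the Poincar\'e duality statement of the next subsection.
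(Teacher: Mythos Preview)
Your approach is essentially the paper's: promote the length-$3$ free right-module resolution of $\C_\varepsilon$ to a length-$3$ free bimodule resolution of $\B(E)$ (the paper does this via base change along $\Delta':A\to A^e$, $a\mapsto a_{(1)}\otimes S^{-1}(a_{(2)})$, which is isomorphic to your functor $M\mapsto A\otimes M$), and then witness non-vanishing in degree $3$ with a one-dimensional twisted module. The only cosmetic difference is that the paper checks the lower bound on the $\tor$ side, computing $H_3(\B(E),\, {}_{\Phi}\C_{\Phi^{-1}})\simeq\C$ directly from the resolution, while you propose the $\ext$ side; for that route note that $\phi_1^{*}$ vanishes on the one-dimensional right module $\C_\gamma$ precisely when $\gamma(u)=(E^{-1}E^t)^2$, so the correct twist is $\gamma=\Phi^2$ rather than $\Phi$.
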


\begin{proof} Put $A = \B(E)$, and consider  the algebra map 
$\Delta' : A \rightarrow A^e = A \otimes A^{\rm op}$, $a \mapsto a_{(1)} \otimes S^{-1}(a_{(2)})$. It induces an exact functor
$\mathcal M_A \longrightarrow \mathcal M_{A^e}$, $M \longmapsto M\otimes_A A^e$ that sends free $A$-modules to free $A^e$-modules and the trivial module $\C_\varepsilon$ to the trivial bimodule $A$ (since $A^e$ is free for the left $A$-module structure induced by $\Delta'$, see e.g. Subsection 2.2 in \cite{bz}). Thus $A$ has a length 3 resolution  by finitely generated free $A^e$-modules and is smooth of dimension $d\leq 3$.  
Moreover using Proposition \ref{ident} and Theorem \ref{reso},  we see that $H_3(\B(E), _\Phi\!k_{\Phi^{-1}}) \simeq \C$ (see more generally the next subsection), so the  resolution in Theorem \ref{reso} has minimal length and we conclude that $A$ is smooth of dimension 3.
\end{proof}

We now show that Poincar\'e duality holds for the algebras $\B(E)$.

\begin{proposition}\label{poincare}
 Let $M$ be a right $\B(E)$-module. Then for any $n\in \{0,1,2 , 3\}$ we have isomorphisms
 $${\rm Ext}_{\B(E)}^n(\C_\varepsilon, M) \simeq {\rm Tor}_{3-n}^{\B(E)}(\C_\varepsilon, \ _{\theta}\!M)$$
 where $\theta$ is the algebra anti-automorphism of $\mathcal B(E)$ defined by $\theta(u) = S(u)E^{-1}E^t
 E^{-1}E^t$ and where $_\theta\!M$ has the left $\B(E)$-module structure given by $a\cdot x := x\cdot \theta(a)$.
\end{proposition}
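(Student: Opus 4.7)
The plan is to use the free Yetter-Drinfeld resolution $P_\bullet \to \C_\varepsilon$ of Theorem~\ref{reso} to compute both sides of the proposition and exhibit an explicit comparison of the resulting complexes. Each term $P_i = V_i \boxtimes \B(E)$ is finitely generated free over $\B(E)$, with $V_0 = V_3 = \C$ and $V_1 = V_2 = V_E^* \otimes V_E$; hence we have canonical vector-space identifications
\[
\Hom_{\B(E)}(P_i, M) \simeq V_i^* \otimes M, \qquad P_i \otimes_{\B(E)}{}_\theta\!M \simeq V_i \otimes M,
\]
so that the cochain complex computing $\Ext^*_{\B(E)}(\C_\varepsilon, M)$ and the chain complex computing $\tor_*^{\B(E)}(\C_\varepsilon, {}_\theta\!M)$ have matching underlying graded vector spaces.

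The first step is to write down the differentials of both complexes explicitly. Using the formulas for $\phi_1, \phi_2, \phi_3$ from Theorem~\ref{reso}, together with $\theta(u) = E^{-1}u^tE^tE^{-1}E^t$ and the derived identity $\theta\big((Eu(E^t)^{-1})_{il}\big) = (u^tE^tE^{-1})_{il}$, one puts both complexes into explicit matrix form. For instance, $\Hom_{\B(E)}(\phi_3, M)$ sends $m \in M$ to $(m(u_{ij} - \delta_{ij}))_{i,j}$, whereas $\phi_1 \otimes \id$ sends $m$ to the element with $(i,j)$-entry $m\big[(E^tE^{-1})_{ij} - (u^tE^tE^{-1})_{ij}\big] = -\sum_k m(u_{ki} - \delta_{ki})(E^tE^{-1})_{kj}$.

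The next step is to construct linear isomorphisms $\alpha_i \colon \Hom_{\B(E)}(P_i, M) \xrightarrow{\sim} P_{3-i} \otimes_{\B(E)} {}_\theta\!M$. For $i = 0, 3$ take $\alpha_i = \id_M$; for $i = 1, 2$, the comparison of differentials in the previous step forces a ``transpose-and-twist'' by the matrix $E^tE^{-1}$, namely
\[
\alpha_1(g)_{ij} = -\sum_k g_{ki}\,(E^tE^{-1})_{kj},
\]
with an analogous formula for $\alpha_2$. These implement the duality $V_E^* \otimes V_E \simeq V_E \otimes V_E^*$ twisted by the sovereign character of $\B(E)$, and are invertible because $E^tE^{-1}$ is.

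The final step is to check commutativity of the three squares
\[
\alpha_{i+1} \circ \Hom_{\B(E)}(\phi_{3-i}, M) = (\phi_{3-i} \otimes \id) \circ \alpha_i, \qquad i = 0, 1, 2.
\]
The outer squares ($i=0$ and $i=2$) reduce to the identity computed above and its mirror version, each being a short calculation. The main obstacle will be the middle square $i=1$: here $\phi_2$ appears on both sides, and expanding the two compositions reduces the desired equality to a quadratic relation among the entries $u_{ij}$ that follows from the defining relations $E^{-1}u^tEu = uE^{-1}u^tE = I_n$ of $\B(E)$ combined with the multiplicativity of the sovereign character $\Phi$. Once the three squares commute, passage to (co)homology yields the asserted isomorphisms $\Ext^n \simeq \tor_{3-n}$ for all $n \in \{0, 1, 2, 3\}$.
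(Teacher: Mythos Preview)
Your proposal is correct and matches the paper's argument: both apply $\Hom_{\B(E)}(-,M)$ and $-\otimes_{\B(E)}{}_\theta M$ to the resolution of Theorem~\ref{reso} and exhibit an explicit chain isomorphism between the resulting complexes. In the paper the comparison maps on the two middle terms are given by the single automorphism $f_0$ of $V_E$ with matrix $(E^t)^{-1}E$ (acting on the $V_E$-factor only, with no sign or transpose in the paper's indexing convention for $\phi_i^t$), and with that choice all three squares are direct linear verifications---your anticipated quadratic obstacle in the middle square does not actually arise.
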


\begin{proof}
 After applying the functor $\Hom_A(-,M)$ to the resolution of Theorem \ref{reso} and standard identifications, the complex to compute the Ext-groups on the the left becomes
 $$0 \to M \overset{\phi_3^t}\longrightarrow
  V^* \otimes V \otimes M \overset{\phi_2^t}\longrightarrow
 V^* \otimes V \otimes M \overset{\phi_1^t} \longrightarrow M   \to 0$$
 where $V=V_E$ and with 
 $$\phi_3^t(x) = \sum_{i,j}e_i^*\otimes e_j \otimes x \cdot(u_{ji}-\delta_{ji})$$
 $$\phi_2^t(e_i^*\otimes e_j \otimes x) = e_i^*\otimes e_j \otimes x +
 \sum_{k,l} e_k^*\otimes e_l \otimes x\cdot \left((u(E^t)^{-1})_{li}E_{kj}\right)$$
 $$\phi_1^t(e_i^*\otimes e_j \otimes x) = x\cdot\left((E^tE^{-1})_{ji}-(Eu(E^t)^{-1})_{ji}\right)$$
 Consider now the isomorphisms
 $\iota : M \to A \otimes_A (_{\theta}\!M)$, $x \mapsto 1 \otimes_A x$, and 
 $$f : V^* \otimes V \otimes M \to V^* \otimes V \otimes A \otimes_A (_{\theta}\!M), \ \phi \otimes v \otimes x
  \mapsto \phi \otimes f_0(v) \otimes 1  \otimes_A x$$
  where $f_0$ is the automorphism of $V$ whose matrix in the canonical basis is $(E^t)^{-1}E$.
  The following diagrams commute
{\small
$$\xymatrix{
0 \ar[r] & M \ar[r]^{\phi_3^t} \ar[d]^{\iota}&
  V^* \otimes V \otimes M\ar[r]^{\phi_2^t} \ar[d]^f &
 V^* \otimes V \otimes M \ar[r]^{\phi_1^t}  \ar[d]^f &
  M  \ar[d]^{\iota} \ar[r] & 0 \\
0 \ar[r] & A\otimes_A (_{\theta}\!M) \ar[r]^(0.4){\phi_1\otimes_A {\rm id} } &
  V^* \otimes V \otimes A\otimes_A(_{\theta}\!M) \ar[r]^{\phi_2\otimes_A {\rm id}}&
 V^* \otimes V \otimes A \otimes_A (_{\theta}\!M) \ar[r]^(.6){ \phi_3\otimes_A {\rm id}} &  A  \otimes_A(_{\theta}\!M) \ar[r] & 0 \\
}$$}
and hence since the homology of the lower complex gives the Tor-groups in the proposition, we get
the result.
\end{proof}

\begin{corollary}\label{poinhoch}
 Let $M$ be a $\B(E)$-bimodule.
 Then for any $n\in \{0 ,1, 2, 3 \}$ we have isomorphisms
 $$H^n(\B(E), M) \simeq H_{3-n}(\B(E), \ _{\sigma}\!M)$$
 where $\sigma$ is the modular automorphism of $\B(E)$ given by $\sigma(u) = E^{-1}E^t u
 E^{-1}E^t$.
\end{corollary}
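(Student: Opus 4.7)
The plan is to chain Proposition \ref{ident} with Proposition \ref{poincare} and conclude via a short Sweedler-level identification of twisted modules. Setting $A=\B(E)$, Proposition \ref{ident} rewrites the left-hand side as $H^n(A,M)\simeq \ext^n_A(\C_\varepsilon,M'')$, where $M''$ has the right $A$-module structure $x\leftarrow a = S(a_{(1)})\cdot x\cdot a_{(2)}$, and rewrites the right-hand side as $H_{3-n}(A,{}_\sigma\!M)\simeq \tor^A_{3-n}(\C_\varepsilon,({}_\sigma\!M)')$, where $({}_\sigma\!M)'$ carries the left $A$-action $a\rightarrow x=\sigma(a_{(2)})\cdot x\cdot S(a_{(1)})$ obtained by applying the recipe of Proposition \ref{ident} to the bimodule ${}_\sigma\!M$ (whose left action is $a\cdot_\sigma x=\sigma(a)\cdot x$, right action unchanged).

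Next I would feed the right $A$-module $M''$ into Proposition \ref{poincare}, which gives $\ext^n_A(\C_\varepsilon,M'')\simeq \tor^A_{3-n}(\C_\varepsilon,{}_\theta(M''))$, where the left action on ${}_\theta(M'')$ unwinds to $a\cdot x = S(\theta(a)_{(1)})\cdot x\cdot \theta(a)_{(2)}$. It then suffices to identify ${}_\theta(M'')$ with $({}_\sigma\!M)'$ as left $A$-modules on $M$, which boils down to the Sweedler identity
$$S(\theta(a)_{(1)})\otimes \theta(a)_{(2)} \;=\; \sigma(a_{(2)})\otimes S(a_{(1)}) \quad\text{in } A\otimes A.$$
To establish this, set $\chi:=\Phi*\Phi$, so that $\theta=S*\chi$ and $\theta(a)=S(a_{(1)})\chi(a_{(2)})$. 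Using $\Delta\circ S=(S\otimes S)\circ\tau\circ\Delta$ one computes $\Delta(\theta(a)) = S(a_{(2)})\otimes S(a_{(1)})\,\chi(a_{(3)})$; applying $S$ to the first tensor factor and collecting the $a_{(2)},a_{(3)}$ contributions by coassociativity yields $(S^2*\chi)(a_{(2)})\otimes S(a_{(1)})$, and the convolution identity
$$S^2*\chi \;=\; (\Phi*\id*\Phi^{-1})*\Phi*\Phi \;=\; \Phi*\id*\Phi \;=\; \sigma$$
(using $S^2=\Phi*\id*\Phi^{-1}$ and $\sigma=\Phi*\id*\Phi$ recorded in Section 2.3) finishes the verification.

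The only substantive content of the proof is this convolution computation; everything else is a mechanical concatenation of earlier identifications. The main (and only) obstacle to anticipate is recognising that the anti-automorphism $\theta$ appearing on the Ext side in Proposition \ref{poincare} is engineered precisely so that, after switching from the $S(a_{(1)})\cdot x\cdot a_{(2)}$ description of Hochschild cohomology to its Tor-analogue, it corresponds to the modular twist $\sigma$ on the Hochschild side — this is exactly the content of the identity $S^2*\Phi*\Phi=\sigma$, which is what makes the whole chain work.
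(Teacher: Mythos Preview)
Your proof is correct and follows essentially the same route as the paper: both reduce via Proposition \ref{ident} and Proposition \ref{poincare} to identifying ${}_\theta(M'')$ with $({}_\sigma M)'$, and both verify this by computing $\Delta(\theta(a))=\theta(a_{(2)})\otimes S(a_{(1)})$ and then using that $S\circ\theta=\sigma$. The only cosmetic difference is that the paper invokes $S\circ\theta=\sigma$ directly (it is recorded in Section 2.3), whereas you re-derive the equivalent identity $S^2*\Phi*\Phi=\sigma$ from $S^2=\Phi*\id*\Phi^{-1}$; these are the same statement since $S\circ\theta=S^2*\chi$.
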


\begin{proof}
 We know for Proposition \ref{ident} that $$H_{3-n}(\B(E), \ _{\sigma}\!M) \simeq 
 {\rm Tor}_{3-n}^{\B(E)}(\C_\varepsilon,
 (_{\sigma}\!M)')$$
  where the left $\B(E)$-module structure on $(_{\sigma}\!M)'$ is given by
 $a\to x=\sigma(a_{(2)})\cdot x\cdot S(a_{(1)})$.
 On the other hand we have by  Proposition \ref{ident} and Proposition \ref{poincare},
 $$H^n(\B(E),M) \simeq {\rm Ext}^n_{\B(E)}(\C_\varepsilon, M'') \simeq 
 {\rm Tor}_{3-n}^{\B(E)}(\C_\varepsilon, \ _{\theta}\!(M''))$$
 The left $\B(E)$-module structure on $_{\theta}\!(M'')$ is given by
 \begin{align*}
  a \rightsquigarrow x &= x \leftarrow \theta(a) = S(\theta(a)_{(1)}) \cdot x \cdot \theta(a)_{(2)} \\
  & =S\theta(a_{(2)})\cdot x \cdot S(a_{(1)}) = \sigma(a_{(2)}) \cdot x \cdot S(a_{(1)})\\
  & = a\rightarrow x
 \end{align*}
We conclude that $_{\theta}\!(M'') =(_{\sigma}\!M)'$ and we have our result.
\end{proof}

See \cite{bz} for more examples of (noetherian) Hopf algebras satisfying Poincar\'e duality.

\subsection{Some homology computations}
In this short subsection we record the computation of the Hochschild homology of $\B(E)$ when the bimodule of coefficients  has dimension 1 as a vector space.

\begin{proposition}
 Let $\alpha, \beta \in \Hom_{\C-{\rm alg}}(\B(E),\C)$. Put $\gamma = \beta^{-1}*\alpha$. Then

 \begin{equation*}
  H_0(\B(E), \ _\alpha \!\C_\beta) \simeq \begin{cases}
                         0 & \text{if $\alpha \not=\beta$} \\
			 \C & \text{if $\alpha=\beta$}
                        \end{cases} \quad 
	 H_3(\B(E), \ _\alpha \!\C_\beta) \simeq \begin{cases}
                         0 & \text{if $\alpha \not=\beta*\Phi^2$} \\
			 \C & \text{if $\alpha=\beta * \Phi^2$}
                        \end{cases}		
 \end{equation*}
$$H_1(\B(E),\  _\alpha \!\C_\beta) \simeq
\frac{\{M \in M_n(\C) \ | \ {\rm tr}(M\gamma(u)^t)={\rm tr}(M)\}}{\{M + E^tM^t\gamma(u)(E^t)^{-1}, \ M \in M_n(k)\}}$$
$$H_2(\B(E), \  _\alpha \!\C_\beta) \simeq 
\frac{\{M \in M_n(\C), \ M + E^tM^t\gamma(u)(E^t)^{-1}=0\}}{\{\lambda(\sum_{i,j} e_i^*\otimes e_j \otimes
  \left( (E^tE^{-1})_{ij} - (E\gamma(u)(E^{t})^{-1})_{ij}\right)), \lambda \in \C\}}$$
\end{proposition}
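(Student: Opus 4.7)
The plan is to reduce the Hochschild homology computation to a Tor computation on the left $\B(E)$-module side, then apply the free Yetter--Drinfeld resolution of Theorem~\ref{reso} to get a very small explicit complex of vector spaces.

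First, by Proposition~\ref{ident}, $H_n(\B(E), {_\alpha\C_\beta}) \simeq \tor^{\B(E)}_n(\C_\varepsilon, ({_\alpha\C_\beta})')$ where the left $\B(E)$-module structure on $({_\alpha\C_\beta})'$ is $a \to x = a_{(2)} \cdot x \cdot S(a_{(1)}) = \alpha(a_{(2)})\beta(S(a_{(1)})) x = (\beta^{-1} * \alpha)(a)\, x = \gamma(a)\, x$. Thus $({_\alpha\C_\beta})' \simeq \C_\gamma$ as left $\B(E)$-modules, where $\gamma = \beta^{-1} * \alpha$ is again a character.

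Next, forget the Yetter--Drinfeld structure of the resolution in Theorem~\ref{reso}: it provides a resolution of $\C_\varepsilon$ by free right $\B(E)$-modules of the explicit shape $V \otimes \B(E)$. Tensoring with $\C_\gamma$ on the right kills the $\B(E)$-factors (using $(V \otimes \B(E)) \otimes_{\B(E)} \C_\gamma \simeq V$), and I would obtain the finite complex
\begin{equation*}
0 \longrightarrow \C \overset{\psi_1}\longrightarrow V_E^* \otimes V_E \overset{\psi_2}\longrightarrow V_E^* \otimes V_E \overset{\psi_3}\longrightarrow \C \longrightarrow 0
\end{equation*}
whose homology computes the four groups $H_n(\B(E), {_\alpha\C_\beta})$ for $n = 0, 1, 2, 3$ (and all higher homology vanishes by the length of the resolution). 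The maps $\psi_i$ are obtained from $\phi_i$ by replacing each occurrence of the generator matrix $u \in M_n(\B(E))$ by the scalar matrix $G := \gamma(u) \in M_n(\C)$.

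The central step is then a bit of matrix bookkeeping. Identifying $V_E^* \otimes V_E \simeq M_n(\C)$ via $e_i^* \otimes e_j \leftrightarrow E_{ij}$ (the matrix unit), I would verify the three formulas
\begin{align*}
\psi_3(M) &= \mathrm{tr}(MG^t) - \mathrm{tr}(M), \\
\psi_2(M) &= M + E^t M^t G (E^t)^{-1}, \\
\psi_1(\lambda) &= \lambda \sum_{i,j}\bigl((E^tE^{-1})_{ij} - (EG(E^t)^{-1})_{ij}\bigr)E_{ij}.
\end{align*}
From these, $H_0$ and $H_3$ are immediate: $\psi_3$ is surjective iff $G \neq I_n$ iff $\alpha \neq \beta$, and $\psi_1 = 0$ iff the matrix $E^tE^{-1} - EG(E^t)^{-1}$ vanishes, which after rearrangement is equivalent to $G = (E^{-1}E^t)^2 = \Phi^2(u)$, i.e.\ to $\alpha = \beta * \Phi^2$. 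The formulas for $H_1 = \ker\psi_3/\mathrm{im}\,\psi_2$ and $H_2 = \ker\psi_2/\mathrm{im}\,\psi_1$ can be read off directly from the matrix expressions above and match the statement.

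The main obstacle is purely notational: tracking the transposes, indices, and the precise action of $\psi_2$ on matrices (in particular the appearance of $M^t$ rather than $M$) requires a careful translation between the tensor-index notation of $\phi_2$ and matrix multiplication, but once this identification is made the computation of the four homology groups is mechanical. No Hopf-algebraic subtlety intervenes beyond the initial reduction $({_\alpha\C_\beta})' \simeq \C_\gamma$.
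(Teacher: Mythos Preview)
Your proposal is correct and is exactly the approach the paper takes: reduce via Proposition~\ref{ident} to $\tor^{\B(E)}_*(\C_\varepsilon, \C_\gamma)$, tensor the free resolution of Theorem~\ref{reso} with $\C_\gamma$, and read off the four homology groups from the resulting length-$3$ complex of finite-dimensional vector spaces. The paper in fact gives no more detail than ``direct computation by using Proposition~\ref{ident} and Theorem~\ref{reso}'', so your explicit identification of $\psi_1,\psi_2,\psi_3$ in matrix form and the verification of the $H_0$ and $H_3$ conditions spell out precisely what is left implicit there.
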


The proof is a direct computation by using Proposition \ref{ident} and Theorem \ref{reso}. The computation of the cohomology groups follows by using Poincar\'e duality. 

\subsection{Bialgebra cohomology of $\mathcal B(E)$}
The cohomology of a bialgebra was introduced by Gerstenhaber and 
Schack \cite{gs2,gs1}: it is defined by means of an explicit bicomplex whose
arrows are modelled on the Hochschild complex of the underlying algebra
and columns are modelled on the Cartier complex of the underlying coalgebra.
If $A$ is a Hopf algebra, let us denote by $H^*_{b}(A)$
the resulting cohomology. 
Taillefer \cite{tai04} proved that
$$H_b^*(A) \cong {\rm Ext}^*_{\mathcal M(A)}(A,A)$$
where $\mathcal M(A)$ is the category of Hopf bimodules over $A$.
Combined with the monoidal equivalence between Hopf bimodules and Yetter-Drinfeld modules \cite{sc94}, this yields an isomorphism 
$$H_b^*(A) \cong {\rm Ext}^*_{\yd_A^A}(\C,\C)$$
Bialgebra cohomology of algebras of polynomial functions on linear algebraic groups was studied by Parshall and Wang \cite{pw},  with a complete description in the connected reductive case.
It seems that only very few full computations of $H_b^*(A)$ are known in the non-commutative non-cocommutative case, see \cite{tai07} and the references therein.
 The following result gives in particular the description of the bialgebra cohomology of $\mathcal O(\SL_q(2))$ for $q$ generic, i.e. $q=\pm 1$ or $q$ not a root of unity.

\begin{theorem}
 Assume that $\mathcal B(E)$ is cosemisimple, i.e. that the solutions of the equation ${\rm tr}(E^{-1}E^t)=-q-q^{-1}$ are generic. Then we have
 \begin{equation*}
  H^n_{b}(\B(E)) \simeq \begin{cases}
                         0 & \text{if $n \not=0,3$} \\
			 \C & \text{if $n=0,3$}
                        \end{cases}
 \end{equation*}
\end{theorem}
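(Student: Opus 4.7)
The plan is to combine the identification $H^*_b(A) \cong \Ext^*_{\yd_A^A}(\mathbb{C}, \mathbb{C})$ recalled above (with $A = \mathcal{B}(E)$) with the free Yetter--Drinfeld resolution of Theorem \ref{reso}. Applying $\Hom_{\yd_A^A}(-,\mathbb{C})$ to that resolution and invoking Proposition \ref{adjointfunctor}, which gives a natural identification $\Hom_{\yd_A^A}(V\boxtimes A, \mathbb{C}) \cong \Hom_{\mathcal{M}^A}(V, \mathbb{C})$, reduces the computation of $H^*_b(\mathcal{B}(E))$ to that of the cohomology of
$$0 \to \Hom_{\mathcal{M}^A}(\mathbb{C},\mathbb{C}) \xrightarrow{\phi_3^*} \Hom_{\mathcal{M}^A}(V_E^*\otimes V_E, \mathbb{C}) \xrightarrow{\phi_2^*} \Hom_{\mathcal{M}^A}(V_E^*\otimes V_E, \mathbb{C}) \xrightarrow{\phi_1^*} \Hom_{\mathcal{M}^A}(\mathbb{C}, \mathbb{C}) \to 0.$$

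Next I would verify that in the cosemisimple case each of the four Hom-spaces above is one-dimensional. By hypothesis $\mathcal{B}(E)$ is monoidally equivalent to $\mathcal{O}(\SL_q(2))$ for some generic $q$, whose fundamental comodule is simple; since monoidal equivalences preserve simplicity of objects, $V_E$ is a simple $\mathcal{B}(E)$-comodule. Hence by Schur's lemma $\Hom_{\mathcal{M}^A}(V_E^*\otimes V_E, \mathbb{C}) \cong \Hom_{\mathcal{M}^A}(V_E, V_E) = \mathbb{C}$, a generator being the evaluation map $e_i^*\otimes e_j \mapsto \delta_{ij}$ (which is colinear in view of $S(u)u = I$). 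The complex thus takes the shape $0 \to \mathbb{C} \to \mathbb{C} \to \mathbb{C} \to \mathbb{C} \to 0$ and it remains only to identify the three scalar differentials.

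The bulk of the work is then to compute these three maps using the adjunction formula $\tilde{f}(v\otimes a) = f(v)\varepsilon(a)$ (valid here because $\mathbb{C}$ carries the trivial module structure) together with the explicit expressions for $\phi_1,\phi_2,\phi_3$ from Section 5. Since $\varepsilon(u_{ij}-\delta_{ij})=0$, the map $\phi_3^*$ is zero. The map $\phi_1^*$ unwinds to multiplication by the scalar $\mathrm{tr}(E^tE^{-1}) - \mathrm{tr}(E(E^t)^{-1})$, which vanishes because a matrix and its transpose have the same trace. Finally a direct expansion of $\phi_2$ against the evaluation, together with the elementary identity $\sum_k (E^{-1})_{ki}E_{jk}=\delta_{ij}$, shows that $\phi_2^*$ is multiplication by $2$, and in particular is an isomorphism.

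Combining these computations gives immediately $H^0_b(\mathcal{B}(E)) = \mathbb{C}$, $H^1_b = H^2_b = 0$ (using that $\phi_2^*$ is both injective and surjective, together with $\phi_3^* = 0 = \phi_1^*$) and $H^3_b(\mathcal{B}(E)) = \mathbb{C}$. The two points I expect to require the most care are the justification that $V_E$ stays simple after transport along the monoidal equivalence (needed to reduce the four Hom-spaces to lines) and the term-by-term computation of $\phi_2^*$; once these are settled the result falls out of the adjunction and the explicit form of the resolution, without any further cohomological input.
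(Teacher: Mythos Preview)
Your approach is essentially the paper's own (the paper leaves the computation ``to the reader'' and you have carried it out correctly), but you have displaced the role of cosemisimplicity. The step ``applying $\Hom_{\yd_A^A}(-,\C)$ to that resolution \ldots\ reduces the computation of $H^*_b(\mathcal B(E))$'' requires the free Yetter--Drinfeld modules $V\boxtimes A$ to be \emph{projective} objects of $\yd_A^A$; otherwise the cohomology of the Hom-complex need not compute $\Ext^*_{\yd_A^A}(\C,\C)$. This is exactly where cosemisimplicity enters in the paper: since every $A$-comodule is projective, the last clause of Proposition~\ref{adjointfunctor} (which you already cite, but only for the adjunction isomorphism) guarantees that each $V\boxtimes A$ is projective in $\yd_A^A$, so the resolution of Theorem~\ref{reso} is a projective resolution. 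Your use of cosemisimplicity to deduce that $V_E$ is simple is correct and is also needed, but it is a secondary consequence; you should make the projectivity argument explicit first.

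The explicit computations of $\phi_1^*,\phi_2^*,\phi_3^*$ are correct: $\phi_3^*=0$ from $\varepsilon(u_{ij}-\delta_{ij})=0$, $\phi_1^*$ is multiplication by $\mathrm{tr}(E^tE^{-1})-\mathrm{tr}(E(E^t)^{-1})=0$, and $\phi_2^*$ is multiplication by $2$ (here the base field being $\C$ matters). With the projectivity point added, the proof is complete and matches the paper's.
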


\begin{proof}
 The assumption that $\B(E)$ is cosemisimple ensures that evey object in $\mathcal M^{\B(E)}$ is projective, so by Proposition \ref{adjointfunctor} every free Yetter-Drinfeld is a projective object in $\yd_{\B(E)}^{\B(E)}$ and the category $\yd_{\B(E)}^{\B(E)}$ has enough projective objects by Corollary \ref{perfect}. The description of bialgebra cohomology as an Ext functor \cite{tai04,tai04b} now shows that the bialgebra cohomology
of $\B(E)$ is given by the cohomology of the complex obtained by applying the functor 
${\rm Hom}_{\yd_{\B(E)}^{\B(E)}}(-,\C)$ to any projective resolution of the trivial Yetter-Drinfeld module $\C$.
Thus, using the resolution of Theorem \ref{reso}, the description of $H_b^*(\B(E))$ is a direct computation, that we leave to the reader. 
\end{proof}

\subsection{$L^2$-Betti numbers.} Let $A$ be compact Hopf algebra, i.e. $A$ is the Hopf $*$-algebra of polynomial functions on  a compact quantum group \cite{wo}, and assume that $A$ is of Kac type (the Haar state of $A$ is tracial, or equivalently the square of the antipode is the identity). The $L^2$-Betti numbers of $A$ have been defined in \cite{ky08}. There are several possible equivalent definitions \cite{cosh,ky08, thom} and the one we shall use is
$$\beta_k^{(2)}(A)=\dim_{\mathcal M^{\rm op}}{\rm Tor}^A_k(\C_\varepsilon, \  _A\!\mathcal M)$$ 
where $\mathcal M$ is the (finite) von Neumann algebra of $A$, the left $A$-module structure on $_A\!\mathcal M=\mathcal M$ is given by left multiplication through the natural inclusion $A \subset \mathcal M$, and $\dim_{\mathcal M^{\rm op}}$ is L\"uck's  dimension function for modules over finite von Neuman algebras \cite{luck}.

Now let $F \in \GL_n(\C)$ with $F\overline{F} \in \mathbb R I_n$. Recall \cite{ba96} that
$A_o(F)$ is the universal $*$-algebra with generators $(u_{ij})$, $1\leq i,j \leq n$ and relations making the matrix $u=(u_{ij})$ unitary and $u=F \overline{u}F^{-1}$. The $*$-algebra $A_o(F)$ has a natural Hopf $*$-algebra structure (it is isomorphic, as a Hopf algebra, to the previous $\B((F^{t})^{-1})$) and is a compact Hopf algebra. Moreover $A_o(F)$ is of Kac type if and only if $F^t=\pm F$. Thus in this case 
$$A_o(F) \simeq A_o(I_n)=A_o(n), \quad {\rm or} \quad A_o(F) \simeq A_o(J_{2m})$$
where for $2m=n$, $J_{2m} \in \GL_{2m}(\mathbb C)$ is the anti-symmetric matrix    
$$J_{2m}= \left(\begin{array}{cc} 0_m & I_m \\
                          -I_m & 0_m\\
       \end{array} \right)$$
Collins, H\"artel and Thom have shown, combining the results in \cite{cht} with the vanishing of $\beta_1^{(2)}(A_o(n))$ by Vergnioux \cite{ver12},  that all the $L^2$-Betti numbers $\beta_k^{(2)}(A_o(n))$ vanish. 
Similar arguments lead to the following result, which completes the computation of the $L^2$-Betti numbers
of the $A_o(F)$'s of Kac type.

\begin{theorem}
 For any $m\geq 1$ and $k\geq 0$, we have $\beta_k^{(2)}(A_o(J_{2m}))=0$.
\end{theorem}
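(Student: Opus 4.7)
The plan is to mimic the Collins-H\"artel-Thom argument for $A_o(n)$, replacing their resolution of the counit by the more general resolution $(\star_E)$ of Theorem \ref{reso}. Since $A_o(J_{2m}) \simeq \B(E)$ as Hopf algebras for a suitable $E \in \GL_{2m}(\C)$ (as recalled just before the theorem), Theorem \ref{reso} produces a length-$3$ resolution of the counit $\C_\varepsilon$ by finitely generated free right $A_o(J_{2m})$-modules whose $\C$-ranks in homological degrees $0,1,2,3$ are $1, (2m)^2, (2m)^2, 1$. This is exactly the numerical pattern of the Collins-H\"artel-Thom resolution for $A_o(n)$, so the rest of their dimension-count argument carries over.

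I would then apply the functor $-\otimes_A {}_A\!\mathcal M$, where $\mathcal M$ is the finite von Neumann algebra of $A_o(J_{2m})$, to this resolution and use the additivity of L\"uck's dimension function $\dim_{\mathcal M^{\rm op}}$ on short exact sequences. Since the tensored complex consists of finitely generated free right $\mathcal M^{\rm op}$-modules of the same ranks $1,(2m)^2,(2m)^2,1$, the Euler characteristic identity reads
$$\beta_0^{(2)} - \beta_1^{(2)} + \beta_2^{(2)} - \beta_3^{(2)} = \bigl(1 - (2m)^2 + (2m)^2 - 1\bigr)\dim_{\mathcal M^{\rm op}}\mathcal M = 0.$$
Next, $\beta_0^{(2)}(A_o(J_{2m})) = 0$ is the standard vanishing of the zeroth $L^2$-Betti number for any infinite-dimensional compact Kac quantum group, while $\beta_3^{(2)}(A_o(J_{2m})) = 0$ follows from the Poincar\'e duality of Corollary \ref{poinhoch}: the Kac hypothesis $S^2 = {\rm id}$ forces the modular automorphism $\sigma$ to be inner at the von Neumann level, so it does not affect $\dim_{\mathcal M^{\rm op}}$. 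The Euler identity then gives $\beta_1^{(2)} = \beta_2^{(2)}$, and the vanishing $\beta_k^{(2)} = 0$ for $k \geq 4$ is immediate from the length of the resolution.

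The theorem thus reduces to the single vanishing $\beta_1^{(2)}(A_o(J_{2m})) = 0$, and this is the main obstacle. A direct transport from Vergnioux's result $\beta_1^{(2)}(A_o(n)) = 0$ through the cogroupoid $\mathcal B$ is unavailable, because $A_o(n) = \B(I_n)$ and $A_o(J_{2m})\simeq\B(E)$ correspond to different values of ${\rm tr}(E^{-1}E^t)$ and so do not lie in the same connected component of $\mathcal B$. I would expect the vanishing to be obtained either by adapting Vergnioux's quantum Cayley graph argument in \cite{ver12} to the anti-symmetric (symplectic) situation, or by invoking a monoidal-equivalence-invariance statement for the first $L^2$-Betti number of compact Kac quantum groups and routing through an intermediate Kac compact quantum group that is monoidally equivalent to $A_o(J_{2m})$ and for which $\beta_1^{(2)} = 0$ is already available. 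Once this input is in place, the theorem follows at once from the structural argument above.
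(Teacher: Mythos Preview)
Your structural outline---length-$3$ free resolution from Theorem~\ref{reso}, vanishing of $\beta_k^{(2)}$ for $k\geq 4$, $\beta_0^{(2)}=0$, Poincar\'e duality to get $\beta_3^{(2)}=0$ and $\beta_1^{(2)}=\beta_2^{(2)}$, reduction to $\beta_1^{(2)}=0$---is exactly the paper's argument. (The paper uses Poincar\'e duality together with the $L^2$-cohomology description from \cite{thom} to obtain $\beta_3^{(2)}=\beta_0^{(2)}$ and $\beta_2^{(2)}=\beta_1^{(2)}$ directly, rather than your Euler-characteristic computation, but the two routes are equivalent once $\beta_0^{(2)}=0$ is known; the paper cites \cite{kye11} for that.)

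The genuine gap is the last step, and here you are overcomplicating matters. You treat $\beta_1^{(2)}(A_o(J_{2m}))=0$ as something to be extracted from $\beta_1^{(2)}(A_o(n))=0$ via monoidal transport, and correctly observe that the relevant trace invariants differ so the cogroupoid $\mathcal B$ does not connect them. But no transport is needed: Vergnioux's argument in \cite{ver12} is \emph{not} specific to $A_o(n)$. The paper simply points out that Theorem~4.4 of \cite{ver12} and the proof of Corollary~5.2 there go through verbatim for $A_o(J_{2m})$, because the only external input that proof requires is the property of rapid decay, and $A_o(J_{2m})$ has rapid decay by \cite{ver07}. So the ``adapting Vergnioux's quantum Cayley graph argument'' option you mention is in fact already done in \cite{ver12} in the needed generality; there is nothing to adapt.

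One small further point: the paper treats $m=1$ separately, since $A_o(J_2)$ is commutative (it is $\mathcal O(SU(2))$) and the vanishing of its $L^2$-Betti numbers is already recorded in \cite{ky08}; the rapid-decay/Vergnioux argument is invoked only for $m\geq 2$.
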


\begin{proof}
For $m=1$ it is already known \cite{ky08}
that the $L^2$-Betti numbers of $A_o(J_{2})$ all vanish, since $A_o(J_{2})$ is commutative. So we assume that $m \geq 2$.
First, by Theorem \ref{reso},  we have $\beta_k^{(2)}(A_o(J_{2m}))=0$ for $k > 3$.  We have
$\beta_0^{(2)}(A_o(J_{2m}))=0$ by \cite{kye11} and hence by Poincar\'e duality and the fact that the $L^2$-Betti numbers can be defined in terms of $L^2$-cohomology \cite{thom} we have $\beta_3^{(2)}(A_o(J_{2m}))=0=\beta_0^{(2)}(A_o(J_{2m}))$. Similarly by Poincar\'e duality we have $\beta_2^{(2)}(A_o(J_{2m}))=\beta_1^{(2)}(A_o(J_{2m}))$. By
\cite{ver12}, Theorem 4.4 and the proof of Corollary 5.2 (the proof of Corollary 5.2 in \cite{ver12} is valid for $A_o(J_{2m})$ since it has the property of rapid decay \cite{ver07}), we have $\beta_1^{(2)}(A_o(J_{2m}))=0$, and we are done.
\end{proof}

\section{Conclusion}

We have shown that there might exist strong links between the Hochschild (co)homologies of Hopf algebras that have equivalent tensor categories of comodules, although the ring-theoretical properties of the underlying algebras might be very different. We cannot expect to have functoriality at the level of the computation of Hochschild (co)homology group, the situation is rather that if one of the Hopf algebras has a very special homological feature (a free Yetter-Drinfeld resolution of the counit), then so has the other.  

A similar situation had been observed in the work of Voigt \cite{voi11} on the $K$-theory of free orthogonal quantum groups: the existence of a tensor category equivalence does not seem to imply functoriality at the level of $K$-theory groups, but is enough to ensure that one can transport a special homological situation, namely the validity of the Baum-Connes conjecture. 

We hope that the present paper will bring further evidence to convince the reader that tensor category methods can be useful tools in the homological study of Hopf ($C^*$-)algebras.

\end{document}